\tikzset{cross/.style={cross out, draw=black, minimum size=2*(#1-\pgflinewidth), inner sep=0pt, outer sep=0pt},
cross/.default={5pt}}
\newtheorem{thm}{Theorem}
\crefname{thm}{Theorem}{Theorems}
\newtheorem{lem}{Lemma}
\crefname{lem}{Lemma}{Lemmas}
\newtheorem{cor}[lem]{Corollary}
\crefname{cor}{Corollary}{Corollaries}
\crefname{prop}{Proposition}{Propositions}
\newtheorem{conj}[lem]{Conjecture}
\crefname{conj}{Conjecture}{Conjectures}
\newtheorem{ques}[lem]{Question}
\crefname{ques}{Question}{Questions}
\crefname{fact}{Fact}{Facts}
\newtheorem{algo}{Algorithm}
\crefname{algo}{Algorithm}{Algorithms}
\theoremstyle{definition}
\crefname{defn}{Definition}{Definitions}
\newtheorem{rem}[lem]{Remark}
\crefname{rem}{Remark}{Remarks}
\crefname{ex}{Example}{Examples}
\crefname{obs}{Observation}{Observations}
\newtheorem{claim}[lem]{Claim}
\crefname{claim}{Claim}{Claims}
\numberwithin{lem}{section}
\newcommand{\cF}{\ensuremath{\mathcal F}}
\newcommand{\bbE}{{\ensuremath{\mathbb E}} }
\newcommand{\bbL}{{\ensuremath{\mathbb L}} }
\newcommand{\bbN}{{\ensuremath{\mathbb N}} }
\newcommand{\bbP}{{\ensuremath{\mathbb P}} }
\newcommand{\bbT}{{\ensuremath{\mathbb T}} }
\newcommand{\bbV}{{\ensuremath{\mathbb V}} }
\newcommand{\bbZ}{{\ensuremath{\mathbb Z}} }
    \let\d=\delta  
      \let\k=\kappa  \let\l=\lambda
      \let\o=\omega      
  \let\s=\sigma    
\let\y=\upsilon \let\x=\xi 
 \let\F=\Phi  \let\G=\Gamma
\newcommand{\tc}{t_{\mathrm{c}}}
\newcommand{\pc}{p_{\mathrm{c}}}
\renewcommand{\sc}{s_{\mathrm{c}}}
\newcommand{\Eo}{E^{\mathrm{out}}}
\newcommand{\Ei}{E^{\mathrm{in}}}
\newcommand{\1}{{\ensuremath{\mathbbm{1}}} }
\renewcommand{\leq}{\leqslant}
\renewcommand{\geq}{\geqslant}
\renewcommand{\le}{\leqslant}
\renewcommand{\ge}{\geqslant}
\renewcommand{\to}{\rightarrow}
\begin{document}
\title{Weakly constrained-degree percolation on the hypercubic lattice}

\author[,1]{Ivailo Hartarsky\thanks{\textsf{hartarsky@ceremade.dauphine.fr}}}
\author[,2]{Bernardo N. B. de Lima\thanks{\textsf{bnblima@mat.ufmg.br}}}
\affil[1]{CEREMADE UMR 7534, Universit\'e Paris-Dauphine, CNRS, PSL University\protect\\
Place du Mar\'echal de Lattre de Tassigny, 75016 Paris, France}
\affil[2]{Universidade Federal de Minas Gerais, Departamento de Matem\'atica\\
Av. Ant\^onio Carlos 6627, Belo Horizonte-MG, Brazil}
\date{\vspace{-0.25cm}\today}
\maketitle
\vspace{-0.75cm}
\begin{abstract}
We consider the Constrained-degree percolation model on the hypercubic lattice, $\mathbb L^d=(\mathbb Z^d,\mathbb E^d)$ for $d\geq 3$. It is a continuous time percolation model defined by a sequence, $(U_e)_{e\in\mathbb E^d}$, of \emph{i.i.d.} uniform random variables in $[0,1]$ and a positive integer (constraint) $\kappa$. Each bond $e\in\mathbb E^d$ tries to open at time $U_e$; it succeeds if and only if both its end-vertices belong to at most $\kappa -1$ open bonds at that time.

Our main results are quantitative upper bounds on the critical time, characterising a phase transition for all $d\geq 3$ and most nontrivial values of $\kappa$. As a byproduct, we obtain that for large constraints and dimensions the critical time is asymptotically $1/(2d)$. For most cases considered it was previously not even established that the phase transition is nontrivial.

One of the ingredients of our proof is an improved upper bound for the critical curve, $s_{\mathrm{c}}(b)$, of the Bernoulli mixed site-bond percolation in two dimensions, which may be of independent interest.

\end{abstract}

\noindent\textbf{MSC2020:} 60K35; 82B43 
\\
\textbf{Keywords:} phase transition; constrained-degree percolation; mixed site-bond percolation.

\section{Introduction}
\label{sec:intro}
\subsection{Model}
\label{subsec:model}
The constrained-degree percolation model was introduced in \cite{Teodoro14} as follows. Consider an infinite transitive connected graph $G=(\bbV,\bbE)$, let $\kappa$ be a positive integer such that $\kappa\leq \deg(G)$, where $\deg(v)=|\{u\in\bbV:uv\in\bbE\}|$ and $\deg (G)= \deg (v)$ for all $v\in\bbV$, since $G$ is transitive.

Let $(U_e)_{e\in\bbE}$ be a sequence of independent and identically distributed uniform random variables on $[0,1]$. For each $t\in [0,1]$, define a continuous time percolation model, denoting by $\omega^{G,\kappa}(t)\in\{0,1\}^{\bbE}$ the configuration of {\em open} (1) or {\em closed} (0) bonds.

At time $t=0$, we declare all bonds as closed (i.e.\ $\omega_{e}^{G,\kappa}(0)=0$ for all $e\in\bbE$). As time progresses, bonds will become open. Each bond $e\in\bbE$ will try to open at time $U_e$, it will succeed if and only if both its end-vertices have degree, in the cluster of open bonds, at most $\kappa -1$.

More formally, the model is described by the probability space $(\Omega, \cF,\bbP)$, where $\Omega=[0,1]^{\bbE}$ is the space of clocks, $\cF$ is the $\sigma$-algebra generated by cylinder sets of $\Omega$ and $\bbP$ is the product of Lebesgue measures on $[0,1]$. Given the sequence of clocks $(U_e)_{e\in\bbE}$, the percolation configuration on the bond $v_1v_2$ at time $t\in [0,1]$, denoted $\o_{v_1v_2}^{G,\k}(t)$, is the indicator function of the intersection of the events
\[\{U_{v_1v_2}\leq t\}\] and \[\left\{\left|\left\{u\in\bbV\setminus\{v_{3-i}\} :\omega_{v_iu}^{G,\kappa}(U_{v_1v_2})=1\right\}\right|<\k\right\}\mbox{ for } i\in\{1,2\}.\]

Using  the Harris graphical construction, one can establish that this model is well defined (see e.g.\ \cite{Liggett05}). On the other hand, it has a dependence of infinite range and does not satisfy the FKG inequality, nor the insertion tolerance (or finite energy) property. When $\kappa\ge\deg(G)$ the constrained-degree percolation model at time $t$ reduces to the ordinary Bernoulli bond percolation model with parameter $t$.

Given $\o\in\{0,1\}^\bbE$, the notation $0\leftrightarrow\infty$ means that there are infinitely many vertices connected to origin by paths of open edges in $\o$. We simplify the notation denoting the event $\{0\leftrightarrow\infty\mbox{ in }\omega^{G,\kappa}(t)\}$ by $\{0\leftrightarrow\infty\mbox{ at }t\}$. 

The {\em probability of percolation} is the function $\theta^{G,\kappa}(t):[0,1]\rightarrow[0,1]$, where $\theta^{G,\kappa}(t)=\bbP(0\leftrightarrow\infty\mbox{ at }t)$. By definition, the function $\theta^{G,\kappa}(t)$ is non-decreasing in $t$, then it is natural to define the {\em critical time} \[\tc^{G,\k}:=\sup\{t\in [0,1]:\theta^{G,\kappa}(t)=0\},\] with the convention $\sup \varnothing = +\infty$. Whenever they are clear from the context, we will drop the indices $G$ and $\kappa$ from the notation.

Throughout this work, we will almost exclusively deal with the hypercubic lattice $\bbL^d=(\bbZ^d,\bbE^d)$, where $\bbE^d=\{uv\in\bbZ^d\times\bbZ^d: \|u-v\|_1=1\}$.

\subsection{Related Works}
\label{subsec:background}
In \cite{Teodoro14}, it was proved that for the hypercubic lattice $\bbL^d,\ d\geq 2$, when $\kappa=2d-1$, there is percolation at time $t=1$, that is, $\theta^{\bbL^d, 2d-1}(1)>0$.  In \cite{DeLima20}, it was shown that there is a nontrivial phase transition on the square lattice $\bbL^2$ in the nontrivial case $\kappa=3$. More precisely (see Theorem 1 therein), it was proved that $t_{\mathrm{c}}^{\bbL^2,3}\in \left(\frac{1}{2},1\right)$. With a martingale argument, it was also proved in \cite{DeLima20} that for all dimensions $d\geq 2$ and $\kappa=2$, $t_{\mathrm{c}}^{\bbL^d,2}=+\infty$, that is, $\theta^{\bbL^d,2}(t)=0$ for all $t\in [0,1].$ We emphasise that nothing it is known about percolation for other values of $\kappa$ or $t<1$ when $d\geq 3$ prior to the present work.

In \cite{DeLima20}, the uniqueness of the infinite cluster was also studied, as well as the constrained-degree percolation on the regular $d$-ary trees, $\bbT^d$, for which it is proved that $\tc^{\bbT^d,3}<1$ for all $d\geq 2$.

The idea of random system of constrains has ancient origins in the Physical literature and goes back to the work of Flory \cite{Flory39} in which the dimer (or domino) tiling problem was introduced. In 1979, the paper \cite{Gaunt79} introduced the Percolation with restricted-valence model, that is essentially the same model as the constrained-degree percolation studied here, but it is a site percolation version instead of bond percolation.

Some recent mathematical works on variations on percolative models with some kind of constrains on the vertices are \cites{Grimmett10, Grimmett17,Garet18a,Holroyd21}.

\subsection{Results}
\label{subsec:results}
The main goal of this work is to prove that there is a phase transition ($\tc^{\bbL^d,\kappa}<1$) for the hypercubic lattice, $\bbL^d$, for $d\geq 3$ and some nontrivial values of $\kappa$, that is $3\leq\kappa\leq 2d-1$. Moreover, we seek non-perturbative results applying beyond $t\approx 1$, as well as for $\kappa$ much smaller than the least constraint case, $\k=2d-1$. Indeed, we even manage to treat constraints not diverging with $d\to\infty$. From now on, we will denote the critical time for the hypercubic lattice $\bbL^d$, $\tc^{\bbL^d,\k}$, by $\tc^\kappa (d)$.

\begin{thm}
\label{th:general}
Let $c=1.7$ and $\k\ge 10$. Then for any $d>\k/2$ we have 
\[\tc^{\k}(d)\le c/d.\]
Moreover, still with $c=1.7$, for lower dimensions we have the stronger results
\[\tc^{\k}(d)\le\frac{c}{d}\quad \text{ for }\begin{cases}
d=4&\k=7,\\
d\in\{5,6\}&\k\ge 8,\\
d\in\{7,8,9,10,11,12,14,16\}&\k\ge 9.
\end{cases}
\]
\end{thm}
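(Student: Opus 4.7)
The plan is to produce a monotone Bernoulli-type lower bound for the constrained-degree process at time $t = c/d$, project it onto a coordinate plane so as to obtain a genuine product measure, and then apply the improved upper bound on the critical curve $s_{\mathrm c}(b)$ of two-dimensional Bernoulli mixed site--bond percolation that the paper establishes as a byproduct.

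First I would declare an edge $e = uv$ to be \emph{super-open} at time $t$ if $U_e \le t$ and, writing $N_w := |\{e' \ni w : U_{e'} \le t\}|$ for $w \in \{u,v\}$, both $N_u$ and $N_v$ are at most $\k$. For such an edge, when its alarm $U_e \le t$ rings, each endpoint has at most $N_w - 1 < \k$ other incident bonds with strictly earlier clocks, a fortiori fewer than $\k$ actual open neighbours, so the attempt succeeds and $e \in \omega^{\bbL^d,\k}(t)$. It therefore suffices to prove that the super-open configuration percolates.

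To access the two-dimensional bound I would project onto the plane $H = \bbZ^2 \times \{0\}^{d-2}$ and split each local clock environment into in-plane and out-of-plane parts. For $v \in H$, let $\hat N_v$ count the out-of-plane edges at $v$ with clock $\le t$; since the in-plane degree is $4$, the event $\{\hat N_v \le \k - 4\}$ forces $N_v \le \k$. Calling $v$ \emph{site-open} when $\hat N_v \le \k - 4$ and an in-plane bond $\bar e$ \emph{bond-open} when $U_{\bar e} \le t$, the site and bond variables involve disjoint families of clocks, so one obtains a genuinely independent Bernoulli mixed site--bond percolation on $\bbZ^2$ with parameters
\[s = \bbP\bigl(\mathrm{Bin}(2(d-2),t) \le \k - 4\bigr), \qquad b = t = c/d.\]
An open site--bond path here lifts to a super-open path in $\bbL^d$, so supercriticality of the two-dimensional model would yield the theorem.

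The main obstacle is that $b = 1.7/d < 1/2$ for every $d \ge 4$, so the plain 2D model cannot percolate at these parameters. To raise the effective bond probability, I would enhance the bond-open criterion by admitting short out-of-plane detours, e.g.\ three-edge paths $v \to v + e_j \to v + e_j + e_i \to v + e_i$ through selected free directions $j \ge 3$, each of whose intermediate vertices is itself required to be super-open. This forces a partition of the $d - 2$ free directions into those supporting the site criterion and those supporting the detours, trading some of $s$ for an enlarged $b$; the improved bound $s_{\mathrm c}(b)$ is then applied to the optimised pair $(b', s')$. The quantitative heart of the argument is thus a numerical verification that $(b', s')(d, \k)$ lies strictly above the new critical curve with the \emph{same} constant $c = 1.7$ uniformly over $d > \k/2$, $\k \ge 10$. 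The small-dimensional refinements in the statement correspond to parameter values where this verification still succeeds but only under a more delicate, case-specific design of the detour geometry, which is why they must be treated individually.
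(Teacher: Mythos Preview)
Your setup (super-open edges, projection to a plane, recognition that the naive bond parameter $b=t=c/d<1/2$ is hopeless) is fine, but the proposed fix does not work. A three-edge detour in a fixed free direction $j$ succeeds with probability $\Theta(t^3)$; even summing over all $d-2$ free directions the total contribution to the effective bond parameter is $O((d-2)t^3)=O(c^3/d^2)$, which is negligible compared with the deficit $1/2-c/d$ you need to make up. Longer detours are worse, and no finite-depth enhancement of a single edge by out-of-plane excursions can convert a parameter of order $1/d$ into a constant above $1/2$. So the ``trade some $s$ for more $b$'' step, as written, cannot close the gap, and the sentence ``the quantitative heart \dots\ is a numerical verification'' is hiding an actual obstruction rather than a routine check.

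The idea you are missing is the Holley--Liggett projection the paper uses: instead of restricting to a fixed coordinate plane, one maps $\bbZ^d$ onto $\bbZ^2$ by
\[
\Phi(x)=\Bigl(\sum_{i=1}^{\lfloor d/2\rfloor}x_i,\ \sum_{i=d-\lfloor d/2\rfloor+1}^d x_i\Bigr),
\]
so that each pair of $\bbZ^2$-neighbours is joined by $\lfloor d/2\rfloor$ distinct $\bbZ^d$-edges. In the exploration one only needs \emph{one} of these to be feasible, giving a bond parameter of order $1-(1-t)^{\lfloor d/2\rfloor}\approx 1-e^{-c/2}\approx 0.5625$, comfortably above $1/2$. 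The price is that the site and bond events are no longer on disjoint clock families: knowing a vertex has at most $\k$ feasible edges biases the very edges you are about to probe. The paper handles this by an explicit exploration algorithm and shows the conditional bond probability is at least $b=1-(1-t)^{\lfloor d/2\rfloor}/s$ with $s=B_{2d-3,t}(\k-3)$; the improved $s_{\mathrm c}(b)$ bound then finishes the numerics. Your proposal never reaches the regime $b>1/2$ where that bound can bite.
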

Notice that by a standard branching process comparison for ordinary percolation, it is easy to show that $\tc^{\k}(d)\ge 1/(2d-1)$ for all $\k$ and $d$, so that \cref{th:general} shows that $\tc^{\k}(d)=\Theta(1/d)$ as $d\to\infty$. In fact, for high dimensions and weak constraints, the following sharp result is obtained as a byproduct of the proof of \cref{th:general}.
\begin{thm}
\label{th:high}
For any two integer sequences $(\k_n)$ and $(d_n)$ such that $\k_n\to\infty$ and $d_n\to\infty$ as $n\to\infty$, we have
\[\lim_{n\to\infty}d_n\cdot t_{\mathrm{c}}^{\k_n}(d_n)=\frac{1}{2}.\]
\end{thm}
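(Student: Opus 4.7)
The plan is to sandwich $d_n\cdot\tc^{\k_n}(d_n)$ between $1/2-o(1)$ and $1/2+o(1)$, in each direction relying only on the elementary fact that, with common clocks $(U_e)$, the constrained-degree process $\omega^{\bbL^d,\k}(t)$ is a subgraph of the Bernoulli bond configuration $\tilde\omega(t)$ at parameter $t$.

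\emph{Lower bound.} From $\omega^{\bbL^d,\k}(t)\subseteq\tilde\omega(t)$ one gets $\tc^{\k}(d)\ge\pc^{\mathrm{bond}}(\bbL^d)\ge 1/(2d-1)$ by the branching/path-counting bound already recalled right after \cref{th:general}, so
\[\liminf_{n\to\infty}d_n\cdot\tc^{\k_n}(d_n)\ge\liminf_{n\to\infty}\frac{d_n}{2d_n-1}=\frac12,\]
with no assumption on $\k_n$ needed.

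\emph{Upper bound.} Fix $\e>0$, set $t_n=(1+\e)/(2d_n)$, and let $B=\{v\in\bbZ^{d_n}:\deg_{\tilde\omega(t_n)}(v)\ge\k_n\}$ denote the set of Bernoulli-saturated vertices. If $e=uv$ lies in $\tilde\omega(t_n)$ with $u,v\notin B$, then for every $s\le t_n$ each of $u,v$ has degree in $\omega^{\k_n}(s)\subseteq\tilde\omega(t_n)$ strictly below $\k_n$, so the clock $U_e$ successfully opens $e$ in the constrained process. Hence $\omega^{\k_n}(t_n)$ contains the subgraph of $\tilde\omega(t_n)$ induced on $\bbZ^{d_n}\setminus B$, and it suffices to show this induced subgraph percolates. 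The marginal is bounded by
\[\bbP(v\in B)\le\binom{2d_n}{\k_n}t_n^{\k_n}\le\frac{(1+\e)^{\k_n}}{\k_n!}=:\delta_n\longrightarrow 0,\]
and $\1_{v\notin B}$ is $1$-dependent in $(U_e)_{e\in\bbE^{d_n}}$, so by Liggett--Schonmann--Stacey it dominates an i.i.d.\ Bernoulli site field of density $1-\delta_n'\to 1$. Using $\pc^{\mathrm{bond}}(\bbL^d)=1/(2d)+O(d^{-2})$ (Kesten / Hara--Slade), the pure-bond process at $t_n$ lies well inside its supercritical phase; continuity of the mixed critical curve $\sc$ at the pure-bond boundary -- or more concretely, a one-step block renormalisation exploiting the massive gap between the super-exponential $\delta_n'$ and the $\e/(2d_n)$ supercriticality margin -- then guarantees survival of the vanishing site-thinning for all large $n$. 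Thus $\tc^{\k_n}(d_n)\le t_n$ eventually, and letting $\e\to 0$ gives $\limsup d_n\tc^{\k_n}(d_n)\le 1/2$.

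The delicate piece is this last perturbative step: converting the smallness of the marginal of $B$ into actual survival of percolation for the site-thinned bond process, \emph{uniformly} in $d_n\to\infty$. This is where a quantitative form of continuity of $\sc$ near $s=1$ -- precisely the type of mixed site-bond estimate highlighted in the abstract and developed en route to \cref{th:general} -- is the natural tool.
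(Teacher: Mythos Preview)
Your lower bound and the key observation behind your upper bound --- that $\omega^{\k_n}(t_n)$ contains the Bernoulli($t_n$) bond configuration restricted to vertices with Bernoulli degree below $\k_n$ --- are both correct. The gap is exactly the step you flag as delicate, and it does not close as written. Both tools you invoke carry constants that depend on the degree $2d_n$. The Liggett--Schonmann--Stacey domination for a $1$-dependent site field on a graph of maximum degree $\Delta$ requires the marginal defect to be small compared with $1/\Delta$ even to produce a nontrivial product minorant; and any ``continuity of $\sc$ at $s=1$'' or block renormalisation carried out on $\bbL^{d_n}$ must contend with a supercriticality margin of only $\e/(2d_n)$ in the bond parameter. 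Since $\delta_n\sim(1+\e)^{\k_n}/\k_n!$ depends only on $\k_n$, and the hypotheses allow $\k_n\to\infty$ arbitrarily slowly relative to $d_n$ (take $\k_n=\lfloor\log\log d_n\rfloor$, say), there is no ``massive gap'': one can have $\delta_n\gg 1/d_n$, so neither step goes through uniformly. The appendix estimate you allude to is for $\bbL^2$ only and is in fact not used in the paper's proof of this theorem.

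The paper resolves precisely this uniformity problem by the Holley--Liggett projection $\Phi:\bbZ^{d_n}\to\bbZ^{d'}$ onto a \emph{fixed} dimension $d'$, chosen via Kesten's $d'\pc(d')\to 1/2$ so that the limiting effective bond parameter $1-e^{-c/d'}$ exceeds $\pc(d')$ by a fixed amount. On $\bbL^{d'}$ the site parameter of the comparison model tends to $1$ while the bond parameter stays strictly supercritical, and LSS with constants depending only on $d'$ then reduces everything to pure bond percolation above $\pc(d')$. Your unsaturated-vertex observation could plausibly be grafted onto this projection to streamline the exploration, but working directly on $\bbL^{d_n}$ does not yield a proof.
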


In order to illustrate the fact that our approach is not intrinsically high-dimensional, we further adapt it to obtain a nontrivial result even in three dimensions.
\begin{thm}
\label{th:3d}
Let $d=3$ and $\k=5$. Then
\[\tc^{\k}(d)\le 0.62.\]
Moreover, the same holds for the graph  $\bbL^2_{\boxtimes} =(\bbZ^2,\{uv\in\bbZ^2\times\bbZ^2:\|u-v\|_\infty =1\})$, the matching graph of $\bbL^2$, obtained from the square lattice by adding the diagonals of each face, and $\k=7$.
\end{thm}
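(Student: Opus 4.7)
\noindent The plan is to reduce both statements to two-dimensional Bernoulli mixed site--bond percolation on $\bbL^2$ and invoke the improved upper bound on the critical curve $s_c(b)$ established earlier in the paper. In each case one picks a $2$D sublattice, a notion of \emph{site-openness} that keeps the ``extra'' edges (those not in the sublattice) from contributing too much to the degree, and a notion of \emph{bond-openness} keyed directly to the clock of the edge. A degree count then shows that any bond-open edge between two site-open vertices automatically opens in the constrained-degree process, so supercriticality of the induced mixed process transfers to the constrained-degree model.

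\medskip

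\noindent For $d=3$, $\k=5$, take the horizontal layer $L:=\bbZ^2\times\{0\}\subset\bbZ^3$ and call edges of $\bbE^3$ lying in $L$ \emph{horizontal}, the remaining edges incident to $L$ \emph{vertical}; each $v\in L$ has $4$ horizontal and $2$ vertical edges. Declare $v$ site-open iff both vertical clocks at $v$ exceed $t$, and $e\subset L$ bond-open iff $U_e\le t$. The site field is i.i.d.\ Bernoulli$((1-t)^2)$ (vertical edges at distinct vertices of $L$ are disjoint), independent of the i.i.d.\ Bernoulli$(t)$ bond field. If $w\in L$ is site-open, then no vertical edge at $w$ opens in $[0,t]$, so $\deg(w)\le 3$ at the attempt time of any horizontal edge at $w$; since $3<\k-1=4$ the edge passes the constraint at $w$, and by symmetry also at its other endpoint. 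Hence any bond-open edge between two site-open vertices of $L$ is open in $\omega^{\bbL^3,5}(t)$. Taking $t=0.62$ gives $(p_s,p_b)=(0.1444,0.62)$, and the improved $s_c(b)$-bound proved earlier is exactly what is needed to certify that this pair is supercritical.

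\medskip

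\noindent For $\bbL^2_\boxtimes$ with $\k=7$ the same scheme is applied with the axis sublattice $\bbL^2\subset\bbL^2_\boxtimes$ playing the role of $L$ and the $4$ diagonals at each vertex playing the role of the vertical edges. Declare $v$ site-open if at most $2$ of its $4$ diagonals have clock $\le t$, so that $v$'s diagonal degree is at most $2$ throughout $[0,t]$; then at the attempt time of any axis edge at $v$ its total degree is at most $2+3=5<\k-1=6$, and the embedding into mixed site--bond percolation goes through as in the $3$D case. Since diagonals are shared between neighbouring vertices the site field is no longer i.i.d., so a Liggett--Schonmann--Stacey type stochastic domination is used to replace it by an i.i.d.\ site field of slightly smaller density, and the resulting pair $(p_s,p_b)$ at $t=0.62$ is again checked against the proven $s_c(b)$-bound.

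\medskip

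\noindent The main obstacle is quantitative, not conceptual. The single threshold $t=0.62$ must make both reductions supercritical against the same proven bound on $s_c(b)$, so one has to squeeze the earlier bound on $s_c(b)$ as far as possible at $b=0.62$ and, in the matching-lattice case, pick the site criterion carefully enough that the density loss from decoupling does not eat up the margin. The reduction itself is entirely soft; essentially all of the work lies in the numerical optimisation of the $2$D mixed percolation bound.
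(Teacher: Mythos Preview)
Your reduction is conceptually clean but numerically hopeless. With $t=0.62$ your site parameter for $\bbL^3$ is $(1-t)^2\approx 0.144$, and the paper's bound \eqref{eq:cor:pc} only yields $s_{\mathrm c}(0.62)\le \exp\bigl(-\tfrac23(0.12+\tfrac13\log(4.28/5))\bigr)\approx 0.956$. Even the numerical (nonrigorous) critical curve in \cref{fig:mixed} sits near $0.85$ at $b=0.62$; no value of $t\in[0,1]$ makes $(s,b)=((1-t)^2,t)$ supercritical for mixed site--bond percolation on $\bbL^2$, since $(1-t)^2+t\le 1$ always. The $\bbL^2_\boxtimes$ variant fares no better: your ``at most two feasible diagonals'' event has probability $B_{4,0.62}(2)\approx 0.49$, still far below any proven or conjectured $s_{\mathrm c}(0.62)$. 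So the claim that ``the improved $s_{\mathrm c}(b)$-bound proved earlier is exactly what is needed'' is simply false; that bound is used only for \cref{th:general}, not for \cref{th:3d}.

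The paper's argument is essentially the opposite of yours. Rather than \emph{discarding} vertices whose out-of-plane edges are feasible, it \emph{uses} those edges to rescue otherwise saturated vertices: when all six edges at $a$ are feasible, $a$ is still usable provided one of the two vertical edges has the largest clock among $\Eo_a\cup\{b(a)\}\cup\{av:v\notin P\}$, since then the in-plane edges of interest open before the constraint bites. This lets the paper compare directly with \emph{bond} percolation on $\bbL^2$ at parameter $p>1/2$ (no site thinning at all), the key estimate being $t^3-t^5+\tfrac{2}{6}t^5>p^3$ at $p=1/2$, which holds for $t=0.62$. The delicate part is \cref{lem:decoupling}, controlling how the conditioning on clock orderings propagates through the exploration; your proposal has no analogue of this and, as written, cannot be salvaged by choosing $t$ differently.
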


Finally, let us mention that in \cref{app} we establish a quantitative improvement of a result of Chayes and Schonmann \cite{Chayes00} on mixed site-bond percolation in the case of $\bbL^2$, which may be of independent interest. It is used in the proof of \cref{th:general} to allow the treatment of smaller values of $\k$.

\subsection{Ideas of the proofs}
\label{subsec:sketch}
Let us give an overview of the proofs of our main results. In both cases the idea, though peculiar, is quite simple. In this section we prefer to omit some technical issues in order not to obscure the essence, hoping that this will not lead to confusion.

\subsubsection{General result}
\label{subsubsec:sketch:general}
We first sketch the proof of \cref{th:general}. Although the same proof will directly apply to all sets of parameters in the statement of the theorem, the reader is advised to think of $t=c/d$ with $c$ sufficiently large, but fixed; $\k$ sufficiently large, depending on $c$, but also fixed; $d$ even and going to infinity. This is essentially the setting of \cref{th:high}.

Naively, the guiding principle is ``look for percolation via unsaturated sites'', which is a non-monotone event and thus prohibits perturbative arguments. Intuitively, for our choice of parameters each site should have less than $\k$ edges with $U_e\le t$ (we call such edges feasible) with high probability (since $\k\gg 2c$ and the degree of each vertex is approximately Poisson with parameter $2c$). Discarding the remaining vertices (called saturated), we only need to show that edges are still open with fairly high probability. Fortunately, the information that a vertex was not saturated is not significant, as this event is likely, so the edges of those vertices should almost form an independent Bernoulli bond percolation. It is then not unreasonable to hope that the resulting nearly independent mixed site-bond percolation with site parameter close to $1$ and bond parameter close to $t$ would be supercritical, as it is known that the critical probability of bond percolation on $\bbL^d$ satisfies $\pc(d)=(1+o(1))/2d<c/d=t$ \cite{Kesten90}. Unfortunately, we could not formalise this intuition and rather take several detours, while keeping the same guideline.

The first technique we rely on originates from a classical work of Holley and Liggett \cite{Holley78}, where it was used to prove an upper bound of order $1/d$ on the critical parameter of the contact process in high dimensions. Similarly to \cite{Holley78} we map $\bbZ^d$ to $\bbZ^2$ with each two neighbours connected by $\lfloor d/2\rfloor$ edges in one direction and $\lfloor d/2\rfloor$ in the opposite one as follows. We split the $d$ vectors of the canonical basis of $\bbZ^d$ in two halves, viewing the first $\lfloor d/2\rfloor$ as pointing east (their opposites point west), while the other half point north (see \cref{eq:Fi}).

There are several advantages to working in two dimensions rather than directly on $\bbL^d$. Firstly, the control we have on mixed percolation deteriorates quickly with dimension. Secondly, exploring only few of the edges around a vertex allows us to keep the distributions of $U_e$ close to their original \emph{i.i.d.} uniform laws despite the dependencies. In particular, under this mapping the percolation model acquires the ``finite energy'' property, though we will not use it explicitly.

We build an exploration of a part of the cluster of $0$ in the original constrained percolation model, so as to compare it with mixed site-bond percolation on $\bbL^2$ via the mapping described above. The exploration should rather be viewed in $\bbL^2$ as we will never visit the same site there twice. Starting with $0$ as our only active site we repeat the following steps until we run out of untreated active sites. We first verify if the active site under consideration is saturated (has more than $\k$ feasible edges). If it is, we close it and move on. Notice that we may only have discovered $3$ feasible edges to that site previously, since it has degree $4$ in $\bbL^2$ and there is no point in considering vertices all of whose neighbours are already in the cluster of $0$ in $\bbL^2$. Thus, the vertex remains open with high probability, as $\k-3\gg 2c$.

Knowing that a vertex is open does not tell us much about whether or not we can reach its neighbours in $\bbL^2$ via feasible edges. We activate each of the inactive neighbouring vertices if we find at least one feasible edge among the $\lfloor d/2\rfloor$ from our current position. Since it suffices to find one feasible bond per neighbour, the next neighbour is not heavily penalised by the previous one becoming active, as $\k-3\gg 2c$. Thus, the probability of a neighbour being activated, given that our original site remained open, is close to the probability that a Poisson random variable with parameter $c/2$ (as there are $\lfloor d/2\rfloor$ edges) is non-zero, which is close to $1$ for $c$ sufficiently large.

Summing up, when viewed in $\bbL^2$, the exploration opens each active site with probability close to $1$ and then activates each of its inactive neighbours with probability close to $1$. This clearly corresponds to the exploration of the cluster of $0$ in $\bbL^2$ in a mixed site-bond percolation with both parameters close to $1$. Since this is easily seen to be supercritical, we obtain that with positive probability there is an infinite path in $\bbL^d$ whose edges are all feasible and whose sites are all unsaturated, which concludes the proof that $t\ge \tc^{\k}$.

For ``finite'' values of $c$, $\k$ and $d$ we aim for a comparison with a site-bond percolation with bond parameter slightly larger than $1/2$ and site parameter very close to $1$. We then use a refinement of a result of Chayes and Schonmann \cite{Chayes00} established in \cref{app} to prove that the parameters are indeed supercritical. In order to prove \cref{th:high} we employ the same strategy, with the difference that we now divide the $d$ directions into $d'\ll\min(d,\k)$ groups and thus reduce the problem to mixed site-bond percolation on $\bbL^{d'}$. We then use a simpler qualitative version of the result of \cite{Chayes00} as obtained already by Liggett, Schonmann and Stacey \cite{Liggett97} together with Kesten's result \cite{Kesten90} affirming that $d'\cdot \pc(d')\to1/2$ for ordinary bond percolation as $d'\to\infty$.

It is important to note that our argument is intrinsically non-monotone and it is therefore not possible to bring the matter down to a qualitative result on mixed site-bond percolation such as the classical theorem of Liggett, Stacey and Schonmann \cite{Liggett97}. Instead, we require a rather good quantitative bound on the critical curve of mixed percolation. This non-monotonicity is also the reason for obtaining quite strong non-perturbative upper bounds on $\tc^{\k}$ in \cref{th:general}, contrary to previous works \cites{DeLima20, Teodoro14}, but, on the downside, for the values of $\k$ and $d$ for which the resulting mixed percolation is subcritical for any choice of $t$, we recover no result at all.

\subsubsection{The cubic lattice}
\label{subsubsec:3d}
The proof of \cref{th:3d} will use some of the ingredients of \cref{th:general}. The two most important differences are that we will no longer systematically discard saturated vertices and that we will look for a comparison with two-dimensional bond percolation rather than mixed site-bond percolation. We will focus on $\bbL^3$, as $\bbL^2_{\boxtimes}$ is treated identically. We fix $\k=5$ and $t=0.62$ as in \cref{th:3d}.

This time no mapping is required to reduce $\bbL^3$ to $\bbL^2$, we rather directly look for percolation in the horizontal plane containing $0$. As it was pointed out in \cites{DeLima20,DoAmaral21}, the constrained percolation model does not have a clear monotonicity w.r.t.\ the underlying graph (see \cref{sec:open} below), even if the constraint $\k$ is adjusted accordingly, so we will not rely on any type of monotonicity. Instead, we will use the edges pointing out of the plane to save certain vertices which seem saturated in the plane.

We explore the cluster of $0$ in the plane, treating one active vertex $a$ at a time as follows. We activate the neighbours of $a$ if their bond to $a$ is feasible with the exception of the case in which $a$ is saturated (i.e.\ all 6 edges from $a$ are feasible). If $a$ is saturated, rather than closing it, we look at whether one of its edges going out of the plane happens to have $U_e$ larger than all the edges from $a$ in the plane. If that is the case, we activate all neighbours, while if it fails, we activate none.

Our goal is then to show that the net result of treating each vertex is activating each neighbour at least independently with probability $p>1/2$. Intuitively, the probability of activating all neighbours should be $t^k-t^{2+k}\frac{k}{k+2}$, where $k$ is the number of inactive neighbours. It is then reasonable to hope to be able to establish the desired stochastic domination with
\[p=\max_{k\in\{1,2,3\}}\left(t^k-t^{2+k}\frac{k}{k+2}\right)^{1/k}>1/2.\]

However, more care is needed, as we do not only look at the feasibility of edges, but also at the actual value of their $U_e$. This information may potentially accumulate, propagate and interfere with the probability that the unexplored edges out of the plane have larger $U_e$ than the ones already (partially) explored in the plane from the same vertex. Fortunately, carefully choosing what information to reveal, we are able to ensure that when we activate a vertex the corresponding $U_e$ is either uniformly distributed on $[0,t]$ (as we know it is feasible) or is further biased towards small values, which is in our favour when we compare it with edges out of the plane. This is quite natural, as the only information we may acquire on $U_e$ in addition to being feasible is that it is smaller than one of the edges out of the plane at one of its endpoints.

\section{General case---proof of \cref{th:general,th:high}}
\label{sec:high}
In this section we start by proving \cref{th:general}, assuming the results on mixed site-bond percolation from \cref{app}, namely \cref{cor:pc}, which will be used as a black box. We refer the reader to \cref{subsubsec:sketch:general} for a high-level sketch of the argument.
\begin{proof}[Proof of \cref{th:general}]
Let $t=c/d$ with $c=1.7$ and call an edge $e$ \emph{feasible} if $U_e\le t$. We consider the map
\begin{equation}
\label{eq:Fi}\F(x)=\left(\sum_{i=1}^{\lfloor d/2\rfloor}x_i,\sum_{i=d-\lfloor d/2\rfloor}^dx_i\right)
\end{equation}
from $\bbZ^d$ to $\bbZ^2$. We will build a supercritical mixed site-bond percolation process on $\bbL^2$ stochastically minorating the image of the cluster of $0$ in the constrained percolation model on $\bbL^d$ with parameters $\k$ and $t$. We will do so by exploring the cluster of $0$ in the following way.

We will construct $A_{n}\subset \bbZ^d$ the set of \emph{active} sites at time $n\in\bbN$ and the sets of \emph{open}, \emph{closed} and \emph{useless} vertices, $O_{n},C_{n},U_{n}\subset A_{n}$ respectively. For all $n\in\bbN$ we set $A'_{n}=\F(A_{n})$, $O'_{n}=\F(O_{n})$, $C'_{n}=\F(C_{n})$, and $U'_{n}=\F(U_{n})$. Unless otherwise stated, when incrementing $n$ all the above sets remain unchanged.

\begin{algo}
\label{algo:general}
Initialise $A_0=\{0\}$, $U_0=C_0=O_0=\varnothing$ and $n=0$.

\begin{enumerate}[label=Step \arabic*,ref=Step \arabic*]
\item\label{step:1}
If $A_n=O_n\cup C_n\cup U_n$, then END. Otherwise, fix $a\in A_n\setminus(O_n\cup C_n\cup U_n)$. If $\F(a)$ has no neighbour outside $A'_{n}$, set $U_{n+1}=U_n\cup\{a\}$, increment $n$ and repeat \ref{step:1}. Otherwise,
    \begin{itemize}
        \item if $a$ has at most $\k$ feasible edges, set $O_{n+1}=O_n\cup\{a\}$, increment $n$ and go to \ref{step:2};
        \item otherwise, set $C_{n+1}=C_n\cup\{a\}$, increment $n$ and repeat \ref{step:1}.
    \end{itemize}
    It is important to note that we do not explore the state (feasible or not) of the edges of the vertex $a$, but just ask whether there are more than $\k$ feasible ones or not.

\item\label{step:2}
Let $\{o\}=O_n\setminus O_{n-1}$. Let $V'=\{v'_i,i\in I\}$ be the set of neighbours of $\F(o)$ which are not in $A'_n$. For each $i\in I$ explore the $\lfloor d/2\rfloor$ edges from $o$ to $\F^{-1}(v_i')$ one by one until a feasible edge is discovered. If such an edge is found, let $v_i$ be its endpoint (other than $o$). Set $A_{n+1}=A_n\cup\{v_i,i\in I\}$, increment $n$, and go to \ref{step:1}.
\end{enumerate}
\end{algo}

Let us make a few observations about this algorithm. First, the map $\F$ is always injective on $A_n$, since vertices $v'\in V'$ considered for activation in \ref{step:2} are not in $A_{n}'$ and at most one preimage by $\F$ of each $v'$ is activated, corresponding to the first feasible edge discovered. Furthermore, it is clear that all vertices in $O_n$ have at most $\k$ feasible edges, so feasible edges between vertices in $O_n$ are open in the constrained percolation. Moreover, by induction all vertices in $O_n$ are connected to $0$ (since each new active vertex is connected to an open one). In particular, if the algorithm does not finish, then $0$ belongs to an open infinite cluster in the constrained percolation model. On the other hand, in $\bbL^2$ the algorithm only considers neighbours of $O'_n$, so it never terminates if and only if $0$ is in an infinite cluster $\bigcup_n O'_n$ in $\bbL^2$. 

We next analyse what information we have on the feasibility of different edges. Clearly, nothing is known about edges not incident with any active vertex. Let $n>0$ and $a\in A_n\setminus(O_n\cup C_n\cup U_n)$ be the vertex considered by \cref{algo:general} in \ref{step:1}. It is not hard to see that $a$ became active (in \ref{step:2}) at the time when we discovered the first feasible edge $e(a)$ connecting $a$ to an open vertex. Assume that $a$ does not become useless (which is purely deterministic at the time of consideration of $a$). Then $a$ has at most two edges from other active vertices (since $\F$ is injective on $A_n$) and we have no information on its remaining (at least $2d-3$) edges other than $e(a)$. If $a$ is declared open, in \ref{step:2} we additionally know that it has at most $\k$ feasible edges (including the one, two or three edges we previously had some information on).

Let $B_{m,p}$ denote the cumulative distribution function of the binomial law with parameters $m$ and $p$ (which is a step function continuous to the right).
\begin{claim}
The probability that a vertex considered in \ref{step:1} and not declared useless becomes open, conditionally on the information revealed by the algorithm until that moment, is least $s:=B_{2d-3,t}(\k-3)$.
\end{claim}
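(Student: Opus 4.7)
The plan is to inventory, at the moment $a$ is considered in \ref{step:1}, the edges incident to $a$ on which the history $\cF$ of the algorithm carries information, bound the number of these ``touched'' edges by three, and apply a binomial tail estimate to the remaining edges, whose clocks will be i.i.d.\ uniform on $[0,1]$ conditionally on $\cF$.

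First I would list the touched edges at $a$. The edge $e(a)=oa$ that brought $a$ into $A_n$ was probed during the preceding \ref{step:2} at $o$ and is known to be feasible. The only other possibly touched edges at $a$ are of the form $o'a$ with $o'\in A_n\setminus\{o\}$ adjacent to $a$ in $\bbL^d$; by injectivity of $\F$ on $A_n$ each such $o'$ gives a distinct $\bbL^2$-neighbour of $\F(a)$ in $A'_n$, and since $\F(o)$ already accounts for one such neighbour and, as $a$ is not useless, $\F(a)$ has at least one $\bbL^2$-neighbour outside $A'_n$, the number of such $o'$ is at most two. I would then argue that every other one of the remaining $\ge 2d-3$ edges at $a$ is untouched, because \ref{step:2} probes only edges emanating from the open vertex currently being processed (never $a$, which is not yet open), and any previously performed aggregate feasibility check in \ref{step:1} at some vertex $w$ is a function of $(U_e)_{e\ni w}$ only, whose intersection with the edges at $a$ reduces to the single edge $wa$.

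With $R$ denoting the set of touched edges at $a$ (so $|R|\le 3$) and $U$ its complement (so $|U|\ge 2d-3$), the previous paragraph shows that $\cF$ is measurable with respect to $(U_e)_{e\notin U}$. Independence of $(U_e)_{e\in\bbE^d}$ then implies that $(U_e)_{e\in U}$ are, conditionally on $\cF$, i.i.d.\ uniform on $[0,1]$. Writing $X:=\sum_{e\in R}\1_{U_e\le t}\le|R|$ and $Y:=\sum_{e\in U}\1_{U_e\le t}$, I would obtain $Y\mid\cF\sim\mathrm{Bin}(|U|,t)$ independently of $X$; since $a$ becomes open iff $X+Y\le\k$, this yields
\[\bbP(a\text{ opens}\mid\cF)\ge\bbP(Y\le\k-|R|\mid\cF)=B_{|U|,t}(\k-|R|)\ge B_{2d-3,t}(\k-3)=s,\]
the final inequality following from the coupling $\mathrm{Bin}(|U|,t)=\mathrm{Bin}(2d-3,t)+\mathrm{Bin}(3-|R|,t)$ in distribution together with the trivial bound $\mathrm{Bin}(3-|R|,t)\le 3-|R|$.

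The most delicate step is the inventory above: I would have to verify that the aggregate feasibility tests performed in \ref{step:1} at previously opened vertices, although they may bias the at most two edges $o'a$ to other active neighbours of $a$, cannot leak any information about the remaining $\ge 2d-3$ edges at $a$. This reduces to the observations that \ref{step:2} only probes edges emanating from the currently processed open vertex, and that a \ref{step:1} check at a vertex $w$ is measurable with respect to $(U_e)_{e\ni w}$, which meets the edges at $a$ in at most one edge.
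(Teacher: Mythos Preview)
Your proof is correct and follows essentially the same approach as the paper's. The paper's argument is a one-liner relying on the paragraph preceding the claim, which already establishes that $a$ has at most three edges to active vertices (via injectivity of $\F$ on $A_n$ and the non-useless condition) and that nothing is known about the remaining $\ge 2d-3$ edges; you spell out these two ingredients more carefully, and your final coupling inequality $B_{|U|,t}(\k-|R|)\ge B_{2d-3,t}(\k-3)$ is exactly the paper's $B_{2d-j,t}(\k-j)\ge s$.
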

\begin{proof}
There are $j\le3$ explored edges to active vertices and nothing is known about the other $2d-j$ edges, so the conditional probability we seek is at least $B_{2d-j,t}(\k-j)\ge s$.
\end{proof}
\begin{claim}
The probability that a neighbour $v'$ of $\F(o)$ in \ref{step:2} becomes active, conditionally on the information revealed by the algorithm until the moment when $v'$ is considered, is at least \[b:=1-\frac{(1-t)^{\lfloor d/2\rfloor}}{s}\ge 1-\frac{1}{s\cdot\exp\left(\frac{c}{2}\left(1-\frac 1 d\right)\right)}.\]
\end{claim}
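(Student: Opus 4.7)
The plan is a Bayesian conditioning argument centred on the event $B=\{o\text{ is open}\}$ that was revealed at \ref{step:1}. Let $E$ denote the bundle of $|E|\geqslant\lfloor d/2\rfloor$ edges from $o$ to $\F^{-1}(v')$, and let $\mathcal E$ be the information revealed by \cref{algo:general} up to the moment $v'$ is considered. First I would verify that $\mathcal E=\sigma(\mathcal E',B)$ with $\mathcal E'$ independent of the edge variables of $E$. This amounts to tracing the algorithm: previous iterations of \ref{step:2} for $o$ only explored edges in bundles $E_j$, $j<i$, disjoint from $E$; the activation of $o$ only inspected edges from the activator $p$; and any explored edge or open-check event $B^{(a)}$ with $a\ne o$ concerns edges of $a$, whose intersection with $E$ is empty since $v'\notin A'_n$ (so $a$ cannot lie in $\F^{-1}(v')$).

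With this decomposition in hand, Bayes gives
\[
\bbP\bigl(\text{no edge of }E\text{ is feasible}\bigm|\mathcal E\bigr)
=\frac{\bbP\bigl(\text{no edge of }E\text{ is feasible},\,B\bigm|\mathcal E'\bigr)}{\bbP(B\mid\mathcal E')}.
\]
The independence $E\perp\mathcal E'$ bounds the numerator by $(1-t)^{|E|}\leqslant(1-t)^{\lfloor d/2\rfloor}$, which is the desired factor. The main obstacle is then showing the denominator bound $\bbP(B\mid\mathcal E')\geqslant s$. Following the previous claim, conditionally on $\mathcal E'$ the still-unknown edges of $o$ remain \emph{i.i.d.}\ Bernoulli$(t)$, so $\bbP(B\mid\mathcal E')=B_{|U|,t}(\k-j_f)$, where $|U|$ and $j_f$ are the numbers of unknown and known-feasible edges of $o$. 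The inequality $|U|\leqslant 2d-j_f$ and the monotonicity of $B_{m,t}(k)$ in $m$ reduce matters to $B_{2d-j_f,t}(\k-j_f)$; the easy identity $B_{m+1,t}(k+1)\geqslant B_{m,t}(k)$, applied $3-j_f$ times, then yields $\geqslant B_{2d-3,t}(\k-3)=s$, provided $j_f\leqslant 3$.

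To close the argument I would check $j_f\leqslant 3$: the activating edge $e(o)$ contributes one known-feasible edge of $o$, and each of the previously considered $v'_j$, $j<i$, contributes at most one more (exactly one when the corresponding activation in \ref{step:2} succeeded). Since $\F(o)$ has four $\bbL^2$-neighbours and at least $\F(p)\in A'_n$, we have $|I|\leqslant 3$, hence at most two predecessors and $j_f\leqslant 3$. Combining the above gives $\bbP(v'\text{ is activated}\mid\mathcal E)\geqslant 1-(1-t)^{\lfloor d/2\rfloor}/s$, i.e.\ the first form of $b$. The exponential form then follows at once from $1-t\leqslant e^{-t}$, $\lfloor d/2\rfloor\geqslant(d-1)/2$ and $t=c/d$.
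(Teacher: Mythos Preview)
Your approach is essentially the same as the paper's: both isolate the open-check $B$ for $o$, bound the numerator $\bbP(\{\text{no feasible edge to }\F^{-1}(v')\}\cap B\mid\cdot)$ by $(1-t)^{\lfloor d/2\rfloor}$, and lower-bound $\bbP(B\mid\cdot)$ by $s$ via the binomial-CDF monotonicities $B_{m,t}(k)\ge B_{m+1,t}(k)$ and $B_{m+1,t}(k+1)\ge B_{m,t}(k)$, using that at most three edges of $o$ can be forced feasible.

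One point needs repair. Your equality $\bbP(B\mid\mathcal E')=B_{|U|,t}(\k-j_f)$ tacitly assumes that, under $\mathcal E'$, every edge of $o$ is either fully revealed or entirely untouched. But an edge $ow$ with $w$ a $\bbZ^d$-neighbour of $o$ lying in $A_n\setminus\{p\}$ may carry only \emph{partial} information through the open-check $B^{(w)}\in\mathcal E'$: you correctly argued that such $B^{(w)}$ avoids the bundle $E$, but not that it avoids the rest of $E_o$. The paper handles exactly this by further conditioning on the values of these (at most two) partially-known edges and taking the worst case (the max over $m$ in its display). Your own counting survives the fix: writing $|J|$ for the number of $\bbL^2$-neighbours of $\F(o)$ in $A'_n$ other than $\F(p)$, one has $|I|=3-|J|$, so your $j_f\le i\le 3-|J|$, and after the further conditioning the total feasible count is at most $j_f+|J|\le 3$; hence $\bbP(B\mid\mathcal E')\ge B_{2d-3,t}(\k-3)=s$ as desired. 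Replace the asserted equality by this worst-case inequality and the argument goes through.
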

\begin{proof}
Let $i$ be the number of neighbours already activated by $o$. Observe that we have revealed $i+1$ feasible edges of $o$ ($e(o)$ used to make $o$ active and one for each neighbour activated by $o$ until now during \ref{step:2}) as well as several unfeasible edges. Additionally, we have information on $j\le 2$ more of its edges (to active vertices not activated by $o$). However, $i+j+1\le 3$, since there are only $4$ neighbours of $o'$ and $\F$ is injective on $A_n$.

Let us denote by $X_1,\dots, X_{k}$ with $\lfloor d/2\rfloor \le k\le 2d-3$ the $\1_{U_e\le t}$ for the unexplored edges $e$ of $o$, labelled so that $X_1,\dots,X_{\lfloor d/2\rfloor}$ correspond to edges from $o$ to $\F^{-1}(v')$. The $X_l$ are \emph{i.i.d.} Bernoulli variables with parameter $t$. We further define $X_{k+1},\dots,X_{2d}$ similarly for the remaining edges from $o$. Then $2d-k-i-j-1$ of the latter $X_l$ are already known to be $0$ and up to reordering, we assume them to be $X_{k+i+j+2},\dots,X_{2d}$. In total, the probability that $v'$ is not activated by $o$ is 
\begin{multline*}
\bbP\left(\sum_{l=1}^{\lfloor d/2\rfloor} X_l=0\left|\sum_{l=1}^{k+i+j+1}X_l\le\k\right.\right)\\
\le\max_{m\in[1,i+j+1]}\bbP\left(\sum_{l=1}^{\lfloor d/2\rfloor} X_l=0\left|\sum_{l=1}^{k}X_l\le\k-m\right.\right)\le\frac{B_{\lfloor d/2\rfloor,t}(0)}{B_{k,t}(\k-i-j-1)}.
\end{multline*}
Thus, it suffices to note that 
\[B_{k,t}(\k-i-j-1)\ge B_{2d-i-j-1,t}(\k-i-j-1)\ge B_{2d-3,t}(\k-3)=s.\qedhere\] 
\end{proof}

Observe that $s$ and $b$ are increasing in $\k$, so it suffices to treat $\k=10$. Let us note that if we only wanted to prove that $\tc^{\k}(d)\le c/d$ for $d$ large enough, we are already done by \cref{cor:pc} and the fact that \begin{align*}\lim_{d\to\infty}s&{}= P_{2c}(\k-3)\approx0.9770, &\lim_{d\to\infty}b&{}=1-\frac{e^{-c/2}}{P_{2c}(\k-3)}\approx 0.5625,\end{align*}
where $P_\l$ denotes the cumulative distribution function of a Poisson random variable with parameter $\l$. In order to obtain the desired result for all $d$, we will need a quantitative version of this convergence. 

We claim that for all $d>\k/2$ we have $s\ge 0.9765$ and $b\ge 0.5622$. Indeed, one may verify these inequalities directly for $d\le 4000$ (by computer) and, for $d>4000$ use the facts that 
\[s\ge B_{2d,t}(\k-3)\ge P_{2c}(7)-\frac{c\left(1-e^{-2c}\right)}{4000}\]
by Chen's inequality (see e.g.\ \cite{Steele94}*{Eq. (5.5)}) and so
\[b\ge 1-\frac{e^{-\frac{c}{2}\left(1-\frac{1}{4000}\right)}}{P_{2c}(7)-\frac{c\left(1-e^{-2c}\right)}{4000}}.\]

From the above it remains to check that mixed percolation with site and bond parameters $s\ge 0.9765$ and $b\ge0.5622$ respectively in two dimensions does percolate with positive probability, which follows directly from \cref{cor:pc}.

Turning to the specific low-dimensional cases in the statement of \cref{th:general}, the same proof applies with the corresponding sets of parameters. Indeed, in all cases we have either $s\ge 0.9809$ and $b\ge 0.5596$ or $s\ge0.9708$ and $b\ge0.5806$, which are supercritical by \cref{cor:pc}.
\end{proof}

We next explain the minor modifications needed in the proof above to establish \cref{th:high}.
\begin{proof}[Proof of \cref{th:high}]
As explained in \cref{subsec:results}, $\tc^{\k}(d)\ge 1/(2d-1)$, so it suffices to prove that for any $c>1/2$ and $\k$ and $d$ large enough depending on $c$ we have $\tc^{\k}(d)\le c/d$. Let us fix $c>1/2$, $d'$ large enough depending on $c$, so that $\pc(d')<1-e^{-c/d'}$ for ordinary bond percolation, which is possible, since $\lim_{d'}d'\pc(d')=1/2$ \cite{Kesten90}. We then fix $\k,d$ large enough depending on $c$ and $d'$ and set $t=c/d$.

Instead of \cref{eq:Fi}, we consider the map
\[\F:\bbZ^d\to\bbZ^{d'}:(x_i)_{i=1}^{d}\mapsto\left(\sum_{j=1}^{\lfloor d/d'\rfloor}x_{j+(i-1)\lfloor d/d'\rfloor}\right)_{i=1}^{d'}.\]
We then proceed exactly as in the proof of \cref{th:general} to establish a comparison with mixed site-bond percolation on $\bbL^{d'}$ with parameters \begin{align*}
s&{}=B_{2d-(2d'-1),t}(\k-(2d'-1)),\\
b&{}=1-\frac{(1-t)^{\lfloor d/d'\rfloor}}{s}.
\end{align*}
By the Poisson approximation, letting $d,\k\to\infty$ (regardless of the relationship between the two), while keeping $d'$ fixed, we have $s\to 1$ and $b\to 1-e^{-c/d'}>\pc(d')$. In particular, taking $d$ and $\k$ large enough we have $s\ge 1-\varepsilon$ and $b\ge \pc(d')+\d$ for any $\varepsilon,\d>0$ small enough depending only on $d'$.

Considering site percolation on $\bbL^{d'}$ with parameter $s\ge 1-\varepsilon$, by \cite{Liggett97} we have that for $\varepsilon$ small enough depending on $d'$ and $\d$ it stochastically dominates ordinary bond percolation with parameter $b'=1-\d$. We may then conclude that mixed site-bond percolation on $\bbL^{d'}$ with parameters $s$ and $b$ stochastically dominates pure bond percolation with parameter $bb'> \pc(d')$, which concludes the proof.
\end{proof}

\section{Low dimensional models---proof of \cref{th:3d}}
\label{sec:low}
In this section we prove \cref{th:3d}, refining our strategy from \cref{sec:high} as outlined in \cref{subsubsec:3d}.
\begin{proof}[Proof of \cref{th:3d}]
Let us begin by treating the cubic lattice, from which the two-dimensional result on $\bbL^2_{\boxtimes}$ will follow immediately.

For any vertex $v\in \bbZ^3$ we denote by $E_v=\{uv\in \bbE^3\}$ the set of edges from $v$. Denote by $P$ the plane $\bbZ^2\times\{0\}\subset\bbZ^3$. Our aim will be to establish a comparison with supercritical bond percolation in $P$. Let $\k=5$, $t=0.62$ and call an edge \emph{feasible} if $U_e<t$. We will explore the edges with at least one vertex in $P$ according to the following somewhat improved version of \cref{algo:general}. 

We will construct the sets of \emph{active, open} and \emph{closed} sites $A_n\subset P$, $O_n,C_n\subset A_n$ respectively, as well as the sets $B_n\subset \{uv\in\bbE^3:u\in P,v\in P\}$ and $S_n\subset \{uv\in \bbE^3:u\in P\}$ of \emph{boundary} and \emph{spoilt} edges respectively, as follows (see \cref{fig:algo:3d} for an example). Unless otherwise stated, when incrementing $n$ all the above sets remain unchanged. Whenever an edge $e$ becomes spoilt, we reveal the value of $U_e$.
\begin{figure}
    \centering
    \begin{tikzpicture}[x=2.0cm,y=2.0cm]
\draw [ultra thick] (0,0)-- (1,0);
\draw [ultra thick] (0,0)-- (0,1);
\draw [ultra thick] (1,0)-- (1,1);
\draw [ultra thick] (1,0)-- (1,-1);
\draw [ultra thick] (1,-1)-- (0,-1);
\draw [ultra thick] (1,-1)-- (2,-1);
\draw [ultra thick] (1,1)-- (2,1);
\draw [ultra thick] (2,1)-- (3,1);
\draw [ultra thick,color=gray] (3,1)-- (3,0);
\draw [ultra thick] (3,1)-- (4,1);
\draw [ultra thick] (4,1)-- (4,0);
\draw [ultra thick] (4,0)-- (4,-1);
\draw [dashed] (0.8,-1.2)-- (1,-1);
\draw (1,-1)-- (1.2,-0.8);
\draw (1.8,-1.2)-- (2,-1);
\draw (2,-1)-- (2.2,-0.8);
\draw [ultra thick] (1,0)-- (2,0);
\draw (1,0)-- (1.2,0.2);
\draw (1,0)-- (0.8,-0.2);
\draw (1,1)-- (0.8,0.8);
\draw (1,1)-- (1.2,1.2);
\draw [dashed] (0,0)-- (0.2,0.2);
\draw [dashed] (0,0)-- (-0.2,-0.2);
\draw [ultra thick,color=gray] (4,-1)-- (3,-1);
\draw [dashed] (2,1)-- (1.8,0.8);
\draw [dashed] (2,1)-- (2.2,1.2);
\draw [dashed] (3,1)-- (2.8,0.8);
\draw (3,1)-- (3.2,1.2);
\draw (4,1)-- (3.8,0.8);
\draw (4,1)-- (4.2,1.2);
\draw (4,0)-- (3.8,-0.2);
\draw (4,0)-- (4.2,0.2);
\draw [dashed] (4,-1)-- (3.8,-1.2);
\draw (4,-1)-- (4.2,-0.8);
\draw (0,1)-- (-0.2,0.8);
\draw (0,1)-- (0.2,1.2);
\draw [ultra thick,color=gray] (4,0)-- (5,0);
\draw [dashed] (0,0)-- (-1,0);
\draw [dashed] (2,0)-- (3,0);
\draw [dashed] (2,1)-- (2,2);
\draw [dashed] (1,1)-- (1,2);
\draw [ultra thick,color=gray] (3,1)-- (3,2);
\draw [dashed] (4,1)-- (4,2);
\draw [dashed] (4,1)-- (5,1);
\draw [dashed] (4,-1)-- (5,-1);
\draw [dashed] (4,-1)-- (4,-2);
\draw [dashed] (1,-1)-- (1,-2);
\draw [dashed] (-1,-1)-- (0,-1);
\draw [dashed] (0,-1)-- (0,-2);
\draw [dashed] (0,-1)-- (0.2,-0.8);
\draw (0,-1)-- (-0.2,-1.2);
\draw (2,0)-- (1.8,-0.2);
\draw [dashed] (2,0)-- (2.2,0.2);
\draw (2,-1)-- (2,-2);
\draw (2,-1)-- (3,-1);
\draw (2,-1)-- (2,0);
\draw (0,1)-- (0,2);
\draw (0,1)-- (1,1);
\draw (0,1)-- (-1,1);
\draw (3,0)-- (4,0);
\draw [dashed] (0,-1)-- (0,0);
\draw [dashed] (2,1)-- (2,0);
\draw [rotate around={45:(1.1,0.1)},ultra thick] (1.1,0.1) ellipse (0.32cm and 0.16cm);
\draw [rotate around={45:(3.9,-0.1)},ultra thick] (3.9,-0.1) ellipse (0.32cm and 0.16cm);

\fill (3,-1) circle (3pt);
\fill (5,0) circle (3pt);
\fill (3,0) circle (3pt);
\fill (3,2) circle (3pt);
\draw (0,1) node[cross,ultra thick] {};
\draw (2,-1) node[cross,ultra thick] {};
\draw (3,-0.4) node[label=$a$] {};
\draw (2.8,0.25) node[label=$b(a)$] {};
\draw (0.9,-1) node[label=$0$] {};
\draw (0.5,-0.4) node[label=$e_1$] {};
\draw (0.9,0.25) node[label=$e_2$] {};
\draw (1.5,-0.4) node[label=$e_3$] {};
\draw (1.25,0.15) node[label=$e$] {};
\draw (0.8,-0.75) node[label=$b(v)$] {};
\draw (1.1,-0.3) node[label=$v$] {};
\draw (4.5,-0.4) node[label=$e'_1$,anchor=north east] {};
\draw (3.9,-0.75) node[label=$e'_2$,anchor=north east] {};
\draw (3.7,-0.4) node[label=$e'$,anchor=north east] {};
\draw (3.8,0.25) node[label=$b(v')$] {};
\draw (4.1,-0.3) node[label=$v'$] {};
\end{tikzpicture}
    \caption{Illustration of \cref{algo:3d}. The currently discovered part of the cluster of the origin is thickened. The active sites which are neither open or closed yet, $A_n\setminus (O_n\cup C_n)$ are represented by dots, the closed ones, $C_n$, are crossed out, the open sites, $O_n$, are all the remaining vertices of the thick cluster, the boundary edges, $B_n$, are drawn in grey, the spoilt ones, $S_n$, are black. The solid lines represent feasible edges, while dashed ones are not feasible. Notice that the vertex $a$ will surely become closed when it is examined, as it has no inactive neighbours in $P$. The two circled edges going out of the plane $P$ were used to save their respective vertices from being closed due to having all their 6 edges feasible. Namely, we have $U_{e}\ge\max(U_{e_1},U_{e_2},U_{e_3},U_{b(v)})$ and $U_{e'}\ge\max(U_{e'_1},U_{e'_2},U_{b(v')})$.
    \label{fig:algo:3d}}
\end{figure}
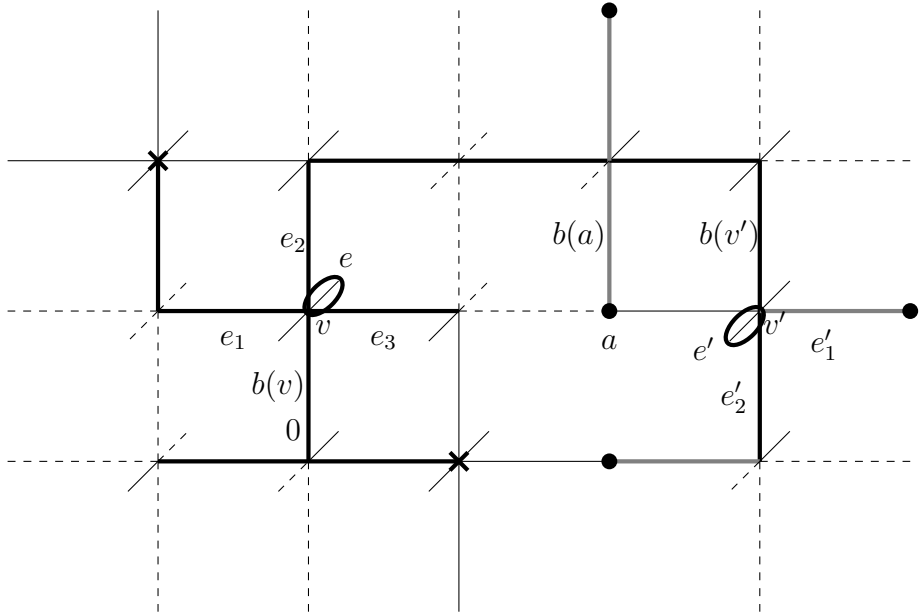
\begin{algo}
\label{algo:3d}
Initialise $A_0=\{0\}$, $O_0=C_0=B_0=S_n=\varnothing$ and $n=0$. REPEAT the following.
If $A_n=O_n\cup C_n$, then END. Otherwise, fix $a\in A_n\setminus(O_n\cup C_n)$ and let $b(a)$ denote the edge in $B_n$ with endpoint $a$ (there will always be exactly one such edge except for $a=0$, in which case we make the convention $\{b(0)\}=\varnothing$). If $a$ has no neighbour in $P\setminus A_n$, set $C_{n+1}=C_n\cup\{a\}$, $B_{n+1}=B_n\setminus\{b(a)\}$, $S_{n+1}=S_n\cup E_a$, increment $n$ and go back to REPEAT. Otherwise, for each edge $av$ in $E_a\setminus S_n$ we reveal whether it is feasible or not, let $\G(a)$ denote the set of vertices $v\in P\setminus A_n$ such that $av$ is feasible, set $\Eo_a=\{av,v\in\G(a)\}$ and proceed as follows.
\begin{itemize}
    \item If at most $5$ edges in $E_a$ are feasible, set  $A_{n+1}=A_n\cup\G(a)$, $O_{n+1}=O_n\cup\{a\}$, $B_{n+1}=(B_n\setminus\{b(a)\})\cup\Eo_a$, $S_{n+1}=S_n\cup \left(E_a\setminus \Eo_a\right)$, increment $n$ and go back to REPEAT.
    \item Otherwise, let $au$ be the edge with largest $U_{au}$ among \[\Eo_a\cup\{b(a)\}\cup\{av, v\not\in P\}.\]
    If $u\in P$, then set $C_{n+1}=C_n\cup\{a\}$, $B_{n+1}=B_n\setminus \{b(a)\}$, $S_{n+1}=S_n\cup E_a$, increment $n$ and go back to REPEAT. Otherwise, set $A_{n+1}=A_n\cup\G(a)$, $O_{n+1}=O_n\cup\{a\}$, $B_{n+1}=(B_n\setminus\{b(a)\})\cup \Eo_a$, $S_{n+1}=S_n\cup (E_a\setminus\Eo_a)$, increment $n$ and go back to REPEAT.
\end{itemize}
\end{algo}

Let us make a few observations about this algorithm. First,  since any vertex $v$ is activated at most once, it is clear that $b(v)$ is well defined. Moreover, at the time of activation of a vertex $v$ the other endpoint $a$ of $b(v)$ becomes open, which guarantees that the constraint at $a$ is not violated by $b(v)$ (either there are not 6 feasible edges at $a$ or at least one of them is to be added after $b(v)$) and that edge is feasible. Thus, whenever a vertex $v$ becomes open, the edge $b(v)$ is known to be present in the constrained percolation, since we have also checked that the constraint at $v$ is not violated by that edge. Hence, all open vertices belong to the cluster of $0$. In particular, if the algorithm does not finish, then $0$ belongs to an infinite cluster. On the other hand, the algorithm only activates neighbours of open vertices, so it never terminates if and only if $0$ is in an infinite cluster $\bigcup_n O_n$ in $P$.

Further note that at any given time edges (with at least one end in $P$, as others will never be used) are divided in three categories: spoilt, boundary and \emph{unexplored}. We know nothing about the value $U_e$ for unexplored $e$, we know the exact value for spoilt $e$ and we will next assess boundary edges and show that we may view them as unexplored. Observe also that all non-boundary edges incident with open or closed vertices are spoilt, while all edges incident with a vertex $a\in A_n\setminus(O_n\cup C_n)$ except its boundary edge, $b(a)$ are unexplored.

\begin{lem}
\label{lem:decoupling}
For $n\ge 0$ conditionally on the information revealed by the algorithm until time $n$, the corresponding $\s$-algebra being denoted by $\cF_n$ (the random set $B_n$ is measurable w.r.t.\ $\cF_n$), the $(U_e)_{e\in B_n}$ are independent and each $U_e$ has a uniform law on $[0,p(e)]$ with $p(e)\le t$ measurable w.r.t.\ $\cF_n$.
\end{lem}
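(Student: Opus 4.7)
The plan is to argue by induction on $n$. The base case $n=0$ is immediate since $B_0=\varnothing$. For the inductive step, let $a$ be the vertex selected from $A_n\setminus(O_n\cup C_n)$ at step $n$ and start with the preparatory remark that $b(a)$ is the unique boundary edge incident to $a$: any $aw\in B_n$ was added to the boundary when one of its endpoints was opened, and since $a\notin O_n$ that endpoint must be $w$; then $aw\in\Eo_w$ forces $a\in\G(w)$, so $a$ was activated through $aw$ and $aw=b(a)$. Consequently every edge of $B_n\setminus\{b(a)\}$ is disjoint from $E_a$; since at step $n$ \cref{algo:3d} only reveals information about edges in $E_a$, the conditional law of $(U_e)_{e\in B_n\setminus\{b(a)\}}$ passes unchanged from $\cF_n$ to $\cF_{n+1}$ and these variables remain independent of every variable attached to $E_a$.

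It thus remains to describe the new boundary edges $B_{n+1}\setminus(B_n\setminus\{b(a)\})$, which is empty unless $a$ is opened, in which case it equals $\Eo_a$. In Case A and in sub-case B2 with $u\in P$ there is nothing to prove. In sub-case B1 the algorithm reveals only $\1_{\{U_e\le t\}}$ for each $e\in E_a\setminus S_n$; by the inductive hypothesis $U_{b(a)}$ already lies in $[0,t]$ and the remaining edges of $E_a\setminus S_n$ were unexplored $\mathrm{Unif}[0,1]$ independently of everything else, so revealing their feasibility produces i.i.d.\ $\mathrm{Unif}[0,t]$ marginals on the feasible ones. The subsequent spoiling of $E_a\setminus\Eo_a$ reveals variables independent of $\Eo_a$ and of $B_n\setminus\{b(a)\}$, so $(U_e)_{e\in\Eo_a}$ is conditionally i.i.d.\ $\mathrm{Unif}[0,t]$ given $\cF_{n+1}$, yielding $p(e)=t$.

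The genuinely delicate branch is B2 with $u\notin P$. Here $\Eo_a=\{e_1,\dots,e_m\}$ joins the boundary while $b(a)$, the in-plane edges $e'_j$ from $a$ to already active neighbours, and both out-of-plane edges $f_1,f_2$ are spoilt and have their $U$-values revealed. Writing $v_r=\max(U_{f_1},U_{f_2})$, the event ``$u\notin P$'' is exactly
\begin{equation*}
\{U_{b(a)}\le v_r\}\cap\bigcap_{i=1}^{m}\{U_{e_i}\le v_r\}.
\end{equation*}
Given the revealed exact values of $U_{b(a)},U_{e'_j},U_{f_1},U_{f_2}$, the first clause is a deterministic constraint and only the second truly concerns the $U_{e_i}$. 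Since before step $n$ the $U_{e_i}$ were i.i.d.\ $\mathrm{Unif}[0,1]$ independent of the rest, combining the feasibility constraint $U_{e_i}\le t$ with the ceiling $U_{e_i}\le v_r$ makes $(U_{e_i})_i$ conditionally on $\cF_{n+1}$ a product of $\mathrm{Unif}[0,\min(t,v_r)]$ laws, via the elementary fact that a uniform conditioned on lying below a fixed value is uniform on the smaller interval. Taking $p(e_i)=\min(t,v_r)\le t$, which is $\cF_{n+1}$-measurable, closes the induction. The main obstacle is precisely this last case: one must pin down exactly what $\cF_{n+1}$ records (the identity of the selected sub-case, the feasibilities of newly explored edges, and the exact values of newly spoilt edges) and verify that the combined conditioning cuts each $U_{e_i}$ down exactly to an independent uniform on $[0,\min(t,v_r)]$---no more and no less; the other branches are routine once the disjointness of $B_n\setminus\{b(a)\}$ from $E_a$ has been exploited.
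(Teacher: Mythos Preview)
Your proof is correct and follows essentially the same route as the paper's: induction on $n$, the observation that $b(a)$ is the unique boundary edge at $a$ so that $B_n\setminus\{b(a)\}$ is disjoint from $E_a$ and hence untouched, and a case split according to the branch of \cref{algo:3d}, yielding $p(e)=t$ in the ``at most five feasible'' case and $p(e)=\max(U_{f_1},U_{f_2})\le t$ in the ``all six feasible, $u\notin P$'' case. Your write-up is in fact slightly more explicit than the paper's (you spell out why $b(a)$ is unique and write the event $\{u\notin P\}$ as a product of ceilings), but there is no substantive difference in strategy.
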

\begin{proof}
Observe that the connected components of $B_n$ are stars centered at open vertices. We will prove the statement by induction on $n$, so we assume it holds for a given $n$. 

At step $n$ \cref{algo:3d} reveals information only about the edges adjacent to a certain vertex $a\in A_n\setminus (O_n\cup C_n)$ and does not take into account any other edges. In particular, the values of $(U_{av})_{v\in P\setminus A_n}$, which were previously unexplored, are independent of $(U_e)_{e\in B_n\setminus\{b(a)\}}$ (conditionally on $\cF_n$) by induction hypothesis. If $a\in C_{n+1}$ there is nothing left to prove, since we have simply spoiled $E_a\setminus S_n$ (namely, $\cF_{n+1}=\s(\cF_n,(U_e)_{e\in E_a\setminus S_n})$) and these edges were either $b(a)$ or unexplored, so they were all independent of $(U_e)_{e\in B_n\setminus\{b(a)\}}$ by induction hypothesis. We next assume that $a\in O_{n+1}$ and consider two cases.

Assume first that there are at most $5$ feasible edges in $E_a$. Then we only explored which of the edges in $E_a\setminus S_n$, spoiled $E_a\setminus(\Eo_a\cup S_n)$ and made $\Eo_a$ boundary edges. In particular, we have \[\cF_{n+1}=\s\left(\cF_n,\Eo_a,(U_e)_{e\in E_a\setminus\left(\Eo_a\cup S_n\right)}\right)\]
(recall that $\Eo_a$ is a random set). Hence, conditionally on $\cF_{n+1}$, we only know that $U_e\le t$ for $e\in\Eo_a$ by definition of $\Eo_a$ and we are done.

Finally, assume that all six edges in $E_a$ are feasible, but the vertex $u$ from \cref{algo:3d} is not in $P$. In this case $\Eo_a=\{av,v\in P\setminus A_n\}$ and, conditionally on $\cF_{n+1}$ we only know that $U_e\le \max_{v\not\in P}(U_{av})$ for all $e\in \Eo_a$. Yet, $\max_{v\not\in P}(U_{av})$ is measurable w.r.t.\ $\cF_{n+1}$, since both such edges $av$ are in $S_{n+1}$, as $a\in O_{n+1}$. Finally, since all edges in $E_a$ are feasible by hypothesis, we obtain that $\max_{v\not\in P}(U_{av})\le t$ and we are done.
\end{proof}

We next establish the desired comparison with ordinary percolation.
\begin{lem}
\label{lem:comparison}
Fix $n>0$ and let $a\in A_n\setminus(O_n\cup C_n)$ be the vertex considered at that step. Let $X=\{v\in P\setminus A_n,va\in E_a\}$ be the set of sites which may be added to $A_n$ at this step. Conditionally on $\cF_n$ the variables $(\1_{x\in A_{n+1}\setminus A_n})_{x\in X}$ stochastically dominate \emph{i.i.d.} Bernoulli variables with parameter $p>0.5$.
\end{lem}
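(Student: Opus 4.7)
My plan is as follows. First I would invoke \cref{lem:decoupling} to describe the conditional joint law of the relevant edges given $\cF_n$: the unexplored $U_e$ with $e\in E_a$ are i.i.d.\ $\mathrm{Unif}[0,1]$, while (for $a\ne 0$) $U_{b(a)}$ is independently distributed as $\mathrm{Unif}[0,p(b(a))]$ for some $p(b(a))\leq t$. Writing $k=|X|$ and
\[ Y_x=\1_{x\in A_{n+1}\setminus A_n}=\1_{U_{ax}\leq t}\cdot\1_{a\in O_{n+1}}, \]
I note from \cref{algo:3d} that the event $\{a\in C_{n+1}\}$ occurs precisely when all six edges of $E_a$ are feasible \emph{and} the maximum of $U_{au}$ over the set $\Eo_a\cup\{b(a)\}\cup\{av:v\notin P\}$ is attained on an edge whose other endpoint lies in $P$.

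Next I would upper-bound $\bbP(a\in C_{n+1}\mid\cF_n)$ by considering the worst case over the ``nuisance'' parameters: assume that all spoilt edges incident to $a$ are feasible (otherwise the closing event has conditional probability $0$) and $p(b(a))=t$ (when $a\ne 0$), so that, conditionally on feasibility of all six edges, the $k+3$ variables entering the maximum are i.i.d.\ $\mathrm{Unif}[0,t]$; by symmetry the maximum falls on an in-plane edge with probability $(k+1)/(k+3)$, and combined with $\bbP(\text{all six feasible}\mid\cF_n)=t^{k+2}$ this gives
\[\bbP(a\in C_{n+1}\mid\cF_n)\leq\frac{(k+1)\,t^{k+2}}{k+3}\]
(a bound valid uniformly over $a=0$ and $a\ne 0$; in fact $a=0$ obeys a slightly better $kt^{k+2}/(k+2)$). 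Since $\{a\in C_{n+1}\}\subseteq\{U_{ax}\leq t\text{ for all }x\in X\}$ and the $(U_{ax})_{x\in X}$ are i.i.d.\ $\mathrm{Unif}[0,1]$ given $\cF_n$, one obtains for every $S\subseteq X$ the identity
\[\bbP(Y_x=1\,\forall x\in S\mid\cF_n)=t^{|S|}-\bbP(a\in C_{n+1}\mid\cF_n),\]
and analogously $\bbP(Y_x=0\,\forall x\in X\mid\cF_n)=\bbP(a\in C_{n+1}\mid\cF_n)+(1-t)^k$.

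Finally, to establish stochastic domination by i.i.d.\ Bernoulli$(p)$ for some $p>1/2$, I would appeal to Strassen's theorem: it suffices to verify $\bbP(Y\in B\mid\cF_n)\geq\bbP_p(B)$ for every monotone increasing $B\subseteq\{0,1\}^X$, and since $k\leq 4$ this reduces to a finite case analysis over the antichains in $\{0,1\}^X$. I expect the binding constraint to come from $B=\{y\ne 0^k\}$, i.e.\ $(1-p)^k\geq(k+1)t^{k+2}/(k+3)+(1-t)^k$; with $t=0.62$ this admits some $p>1/2$ in each case $k\in\{1,2,3,4\}$, with the slimmest margin at $k=1$ where one obtains only $p\gtrsim 0.501$. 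The principal obstacle is the tightness of this bound at $k=1$ and the bookkeeping needed to dispose of the remaining antichains one by one; the joint probabilities computed above make each of these checks an explicit inequality in $p$ and $t$, but the $k=1$ case leaves essentially no room, so it is crucial that $p(b(a))\leq t$ is exploited cleanly rather than conceded.
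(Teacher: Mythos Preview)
Your plan is correct and lands on exactly the same binding inequality as the paper: the case $|X|=1$, where one needs $p\le t-t^3/2\approx 0.5008$. The one substantive difference is in how the stochastic domination is verified. The paper observes that the conditional law of $(Y_x)_{x\in X}$ is exchangeable (the $U_{ax}$ for $x\in X$ are i.i.d.\ and enter the closing event symmetrically), so it suffices to check that $N=\sum_{x\in X}Y_x$ stochastically dominates $\mathrm{Bin}(|X|,p)$; this reduces the task to the $|X|$ inequalities $\bbP(N\ge m\mid\cF_n)\ge\bbP(\mathrm{Bin}(|X|,p)\ge m)$, which are written out and checked for $p=1/2$. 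Your route via Strassen over all increasing $B$ is equally valid, and in fact your identity $\bbP(Y\in B\mid\cF_n)=P_t(B)-\bbP(a\in C_{n+1}\mid\cF_n)$ makes the ``bookkeeping'' you worry about disappear: the constraint becomes $P_t(B)-P_p(B)\ge\bbP(a\in C_{n+1}\mid\cF_n)$, and since $P_t,P_p$ are product measures this difference is minimised over nontrivial increasing $B$ at $B=\{y\ne\vec 0\}$, confirming your expectation without a case-by-case check. Two small corrections: since $n>0$, the vertex $a\ne 0$ has the endpoint of $b(a)$ already in $A_n$, so $|X|\le 3$ (not $4$); and the paper first conditions on $U_{av}$ for any neighbour $v\in A_n\setminus(O_n\cup C_n)$, reducing to the case where all non-$b(a)$, non-$X$ in-plane edges at $a$ are spoilt---you skip this, but your bound $(k+1)t^{k+2}/(k+3)$ remains an upper bound regardless.
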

\begin{proof}
If $X$ is empty there is nothing to prove, so we have $|X|\in\{1,2,3\}$. Let $k$ denote the number of edges $av\in S_n$. Without loss of generality we will assume that $k=3-|X|$, as otherwise we may simply condition on the value of $U_{av}$ for $v\in A_n\setminus (O_n\cup C_n)$.

If any of the $U_{av}>t$ for $av\in S_n$, then $B_{n+1}\setminus B_n$ is simply the set of feasible edges from $a$ to $X$, which gives that $(\1_{x\in A_{n+1}\setminus A_n})_{x\in X}$ are exactly \emph{i.i.d.} Bernoulli with parameter $t$, since these edges are unexplored.

Let us assume that all $3-|X|$ edges $av\in S_n$ are feasible. Let $N=\sum_{x\in X}\1_{x\in A_{n+1}\setminus A_n}$. By symmetry it suffices to show that $N$ stochastically dominates a binomial random variable with parameters $|X|$ and $p$. 

Let us fix $|X|=3$ for a start. We claim that
\begin{align}
\label{eq:N1}\bbP(N=1|\cF_n)&{}=3t(1-t)^{2},\\
\label{eq:N2}\bbP(N=2|\cF_n)&{}=3t^2(1-t),\\
\label{eq:N3}\bbP(N=3|\cF_n)&{}\ge t^3-t^5+\frac{2}{6}t^5.
\end{align}
\cref{eq:N1,eq:N2} follow directly from \cref{algo:3d}, since $N<3$ guarantees that there are at most 5 feasible edges at $a$. To check \cref{eq:N3}, we notice that we need all three edges $ax$ for $x\in X$ to be feasible; in the case when all edges at $a$ are feasible (we already know from $\cF_n$ that $b(a)$ is, but the other $2+|X|$ edges not in $S_n$ are unexplored) we still have a chance that the largest $U_{av}$ among $b(a)$ and the $2+|X|$ unexplored edges is achieved for $v\not\in P$. Using \cref{lem:decoupling} the latter probability is indeed at least $2/(3+|X|)$. It then suffices to check that 
\begin{align*}
3t(1-t)^{2}+3t^2(1-t)+ t^3-t^5+\frac{2}{6}t^5&{}> 1-(1-p)^3,\\
3t^2(1-t)+t^3-t^5+\frac{2}{6}t^5&{}> p^2(p+3(1-p)),\\
t^3-t^5+\frac{2}{6}t^5&{}> p^3,
\end{align*}
for $p=0.5$, which is immediate.

For $|X|\in\{1,2\}$ we proceed identically, reaching the inequalities
\begin{align*}
2t(1-t)+t^2-t^4+\frac{2}{5}t^4&{}> p^2+2p(1-p),\\
t^2-t^4+\frac{2}{5}t^4&{}> p^2,\\
t-t^3+\frac{2}{4}t^3&{}> p,
\end{align*}
which are again easily verified for $p=0.5$.
\end{proof}
With \cref{lem:comparison} we are ready to conclude the proof of \cref{th:3d} for $\bbL^3$. Indeed, it follows that one can couple the exploration of \cref{algo:3d} with an exploration of the cluster of $0$ in bond percolation in $P$ with parameter $p>0.5$ in such a way the set of eventually active sites contains the cluster of $0$ in the latter percolation model. Since the critical probability of bond percolation in two dimensions is $1/2$ \cite{Kesten80}, this concludes our proof.

In order to deal with the square lattice with diagonals added, $\bbL^2_{\boxtimes}$, it suffices to consider the edges $(x,x+(1,1))$ and $(x,x+(1,-1))$ as analogues of $(x,x+(0,0,1))$ and $(x,x+(0,0,-1))$ in the cubic lattice.
\end{proof}

\begin{rem}
Applying an analogous argument to the triangular or checkerboard lattices with $\k=5$ does not quite work as it stands, since we only manage to compare the constrained degree percolation model (with $t=0.678$) with bond percolation on $\bbL^2$ with parameter $p=0.47$, which is subcritical. However, it is likely that working slightly more, one could give a nontrivial result in that setting as well.
\end{rem}

\section{Open problems}
\label{sec:open}
Several questions can arise concerning this model. We can consider other graphs or allow the constraint $\kappa (v)$ to be a function of the vertex set, for example, but in the context of the present work, we would like to mention some open problems concerning the critical time $\tc$ for the hypercubic lattice. Some of these questions were already stated in \cites{DoAmaral21,DeLima20}. 

In ordinary Bernoulli percolation on $\bbL^d$, there is a trivial coupling that shows that the percolation threshold is a non-increasing function of the dimension $d$. This same coupling does not work to show that the critical time is a non-increasing function of $d$, which seems to be true.
\begin{ques}
For $\kappa$ fixed, is the function $\tc^{\k}(d)$ non-increasing in $d$?
\end{ques}

Still concerning the monotonicity of $\tc$, keeping the dimension fixed, one may ask whether $\tc^{\k}(d)$ is monotone in $\k$. For example, it was shown in \cite{DeLima20}, it holds that $t_{\mathrm{c}}^2(2)=+\infty$, $\frac{1}{2}<\tc^3(2)<1$ and it is known from \cite{Kesten80} that $\tc^4(2)=\frac{1}{2}$. Numerical support for the following conjecture was provided in \cite{DoAmaral21} for $d\in\{3,4\}$.

\begin{conj}
For all $d\ge 3$ the function $\tc^{\k}(d)$ is non-increasing in $\k$.
\end{conj}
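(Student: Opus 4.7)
The plan is to fix $\k_1<\k_2$ and $t\in[0,1]$ and aim to show $\theta^{\k_1}(t)\le\theta^{\k_2}(t)$, which would immediately yield the desired $\tc^{\k_2}(d)\le\tc^{\k_1}(d)$. The first approach I would attempt is the direct coupling using the same clocks $(U_e)_{e\in\bbE^d}$, hoping for edgewise domination $\o_e^{\k_1}(t)\le\o_e^{\k_2}(t)$. Unfortunately this fails, and the failure is instructive: at a vertex $v$ with many small-clock incident edges, the larger constraint $\k_2$ lets more of them open early, which can saturate a neighbour $u$ before a later edge $uv'$ gets its chance, while under $\k_1$ the vertex $u$ saturates earlier using a different set of edges and leaves room for $uv'$ to open. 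Small explicit configurations around a high-degree ``hub'' already realise this phenomenon, so any proof must proceed at the level of percolation probabilities rather than configurations.

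Given this obstruction, my main attempt would be an exploration-based coupling in the spirit of \cref{algo:general} and \cref{algo:3d}. I would explore the cluster of $0$ in the $\k_1$-model vertex by vertex, revealing edge clocks on demand, and in parallel maintain a connected subgraph of the cluster of $0$ in the $\k_2$-model. Whenever the $\k_1$-exploration opens an edge $uv$, one checks whether $uv$ is also $\k_2$-open; if not, because one of $u,v$ has already saturated on the $\k_2$-side, one attempts to reach $v$ on the $\k_2$-side through an alternative short path exploiting the extra slack $\k_2-\k_1\ge 1$ at nearby vertices. A potential function counting the currently ``discrepant'' vertices, combined with the quantitative decoupling provided by \cref{lem:decoupling} and the Poisson-type estimates from the proof of \cref{th:general}, would be the tool to argue that the set of discrepancies does not itself percolate.

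The main obstacle is precisely that these discrepancies cascade: a single premature $\k_2$-saturation can force its neighbours off the coupling, which in turn affects their neighbours, and so on. Without FKG or insertion tolerance it is not at all clear that the discrepant set stays subcritical, and any naive potential-function argument has to compete with the full geometry of the $\k_2$-cluster. A more modest but clean fallback would be to settle for asymptotic monotonicity: the bound $\tc^{\k}(d)\ge 1/(2d-1)$ together with \cref{th:high} already yields monotonicity up to a factor of two when $\k$ and $d$ are both large. I would then try to adapt a continuous-time essential enhancement argument in the style of Aizenman--Grimmett to promote this to strict monotonicity at each step $\k\mapsto\k+1$ in the supercritical phase. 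I expect that a full proof of the conjecture requires either a genuinely new monotonicity principle suited to this non-monotone model, or a quantitative comparison with a monotone proxy together with a matching converse; neither seems to be within immediate reach, which is no doubt why the statement is only a conjecture.
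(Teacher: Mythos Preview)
The statement you are addressing is a \emph{conjecture} in the paper, appearing in \cref{sec:open} among the open problems; the paper offers no proof and cites only numerical support from \cite{DoAmaral21}. There is therefore no paper proof to compare against. Your proposal is not a proof either, and you say as much in your final paragraph: you correctly identify that the natural edgewise coupling fails, sketch an exploration-based repair whose main obstacle (cascading discrepancies in a model lacking FKG and insertion tolerance) you cannot control, and retreat to partial or asymptotic statements. That is an honest assessment of the situation, consistent with the paper's own stance.

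Two minor comments. First, your appeal to \cref{lem:decoupling} and the Poisson estimates from the proof of \cref{th:general} as tools for bounding the discrepant set is optimistic: those results are tailored to a one-sided comparison with an \emph{independent} mixed percolation and give no handle on the two-sided comparison between the $\k_1$- and $\k_2$-clusters that your discrepancy argument would require. Second, the Aizenman--Grimmett enhancement machinery you invoke as a fallback relies on differential inequalities obtained via Russo-type formulas, which in turn need some form of pivotality structure; in the constrained-degree model the state of an edge depends on an unbounded neighbourhood of clocks, so even formulating the right analogue of a pivotal event is nontrivial. None of this is a flaw in your write-up, since you present these as directions rather than results, but it is worth flagging that both avenues face substantial technical barriers beyond what you mention.
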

If the answer to the previous question is affirmative, it is logical to define the \emph{critical constraint} $\kappa_{\mathrm{c}}(d):=\min\{\kappa: \tc^\kappa (d)<1\}$.In this language, \cref{th:general,th:3d} provide that $\k_{\mathrm{c}}(d)\le \min(10,2d-1)$ for all $d\ge 3$. In \cite{DoAmaral21} some simulations were performed for dimensions $d=3$ and $4$ that support the following conjecture.
\begin{conj}
For all $d\ge 3$ it hold that $\kappa_{\mathrm{c}}(d)=3$. 
\end{conj}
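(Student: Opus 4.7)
The conjecture asserts $\k_{\mathrm{c}}(d)=3$ for every $d\ge 3$; since $\tc^{2}(d)=+\infty$ by \cite{DeLima20}, one trivially has $\k_{\mathrm{c}}(d)\ge 3$, so the task is to exhibit, for every $d\ge 3$, some $t<1$ with $\theta^{\bbL^d,3}(t)>0$. My plan would be to push the plane-and-reservoir strategy of \cref{th:3d} into the non-perturbative regime $t$ close to $1$, with the $d=2$, $\k=3$ result $\tc^{3}(2)<1$ of \cite{DeLima20} playing the role that planar bond percolation played in \cref{th:3d}.

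Concretely, I would fix an axis-aligned $2$-plane $P\subset\bbL^d$ through the origin and explore the cluster of the origin inside $P$ one vertex at a time. Each $v\in P$ has $4$ in-plane and $2d-4$ out-of-plane edges; since $\k=3$, at most three in-plane edges at any $v$ can ever open, so the in-plane cluster must be a subgraph of $\bbL^2$ of maximum in-plane degree $3$, which is exactly the regime handled in two dimensions by \cite{DeLima20}. The $2d-4$ out-of-plane edges at $a$ would serve as a \emph{reservoir} absorbing the degree constraint: when $a$ is examined, one demands that the three in-plane edges we wish to keep open carry the smallest $U_e$-values among all edges at $a$. One then needs an analogue of \cref{lem:decoupling} guaranteeing that, conditionally on the exploration so far, each in-plane boundary edge still has $U_e$ uniformly distributed on an interval contained in $[0,t]$, so that the in-plane exploration stochastically dominates the $d=2$, $\k=3$ constrained-degree process at some parameter $t'<t$ that can be made strictly larger than $\tc^{3}(2)$.

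The main obstacle is the \emph{absence of slack}. In \cref{algo:3d} the constraint $\k=5$ strictly exceeded the in-plane degree $4$, so a single out-of-plane edge whose clock fires last was enough to rescue every saturated vertex; here the slack is zero, since four in-plane edges already compete for only three constraint slots before any out-of-plane edge enters the picture. Worse, taking $t$ close to $1$ makes nearly every out-of-plane edge feasible, so the reservoir is itself almost saturated, and the event that ``the top values at $a$ lie out of plane'' costs a nontrivial constant factor per vertex. Overcoming this would plausibly require either (i) a mesoscale renormalisation argument in which only a positive density of ``good'' vertices is controlled, bypassing pointwise control; or (ii) a sharp quantitative improvement of the upper bound on $\tc^{3}(2)$, buying enough in-plane margin to compensate the out-of-plane pressure. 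Route (i) feels most natural in spirit, but it is the harder one, because the constrained-degree model lacks FKG, insertion tolerance, and graph monotonicity, all standard inputs to renormalisation schemes.
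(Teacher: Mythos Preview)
The statement is listed in the paper as an open \emph{conjecture} (Section~\ref{sec:open}); there is no proof to compare against, and your text is itself a research outline that explicitly leaves the main step unresolved. Let me nevertheless isolate what I take to be the decisive gap in the plan.

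The reservoir analogy with \cref{th:3d} runs in the wrong direction. In the cubic case the constraint $\k=5$ \emph{exceeds} the in-plane degree $4$, so every in-plane feasible edge at $a$ opens unless the two out-of-plane edges push the total past $5$; the out-of-plane edges are a mild nuisance that is neutralised whenever one of them carries the largest clock. When $\k=3$ the in-plane degree $4$ already exceeds the constraint, and out-of-plane edges can never \emph{help} an in-plane edge open: at best (when all $2d-4$ out-of-plane clocks exceed the four in-plane ones) they are neutral and the vertex behaves exactly like the $\bbL^2$, $\k=3$ model, while otherwise each out-of-plane edge whose clock is among the three smallest at $a$ steals a slot from the plane. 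For $t$ near $1$ the bad case is generic---e.g.\ for $d=3$ the probability that the three smallest clocks at $a$ are all in-plane is only $\binom{4}{3}/\binom{6}{3}=1/5$. Heuristically, then, the in-plane dynamics is dominated \emph{by} the $\bbL^2$, $\k=3$ process rather than dominating it, so the inequality $\tc^{3}(2)<1$ of \cite{DeLima20} gives no foothold: it upper-bounds a quantity that is itself only an upper bound for what you need. Your route~(ii) cannot repair this, since sharpening the value of $\tc^{3}(2)$ does not reverse the direction of the comparison; and route~(i), a renormalisation scheme for a model lacking FKG, finite energy and graph monotonicity, is precisely the open problem rather than a proof strategy.
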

We remark that in \cite{DeLima20} an analogous result was proved for the regular trees.

Turning to high dimensions, in view of our treatment in \cref{th:general,th:high}, it seems reasonable to expect a positive answer to the following question.
\begin{ques}
Does $\lim_{d\to\infty}d\cdot \tc^{\k}(d)$ exist for all $\k\ge 3$?
\end{ques}

\section*{Acknowledgements}
I.H. was supported by ERC Starting Grant 680275 MALIG. B.N.B.L. was supported in part by CNPq grant 305811/2018-5 and FAPERJ (Pronex E-26/010.001269/2016). The authors would like to thank the organisers of the Bernoulli-IMS One World Symposium 2020, which was the occasion for them to ``meet'' and introduce the first author to the model. Thanks are also due to La\"etitia Comminges et Djalil Chafa\"i for pointing us to \cite{Steele94} and to Lyuben Lichev for proofreading.

\appendix
\section{Mixed site-bond percolation on $\bbL^2$}
\label{app}
The lemma below for a mixed Bernoulli percolation on the square lattice, $\mathbb{L}^2$, where sites and bonds are open independently with probabilities $s$ and $b$, respectively, is essentially Proposition 2.1 of \cite{Chayes00}. We make a slight modification in the proof that allows us to improve the Lipschitz constant.

Let us define $P_{s,b}$ as the probability measure for this site-bond percolation model and $\theta_n(s,b):=P_{s,b}(0\leftrightarrow\partial B_{n+1})$, where $B_n=\{x\in\bbZ^2 :\|x\|_1=n\}$ and $(0\leftrightarrow A)$ is the set of configurations $\omega\in\{0,1\}^{\bbZ^2\cup\bbE^2}$ such that there is a path $\gamma=(x_0,x_1,\dots,x_{k+1})$ where $x_0=0$, $x_{k+1}\in A$, $x_i$ is open for all $i\in\{1,\dots, k\}$ and $x_ix_{i+1}$ is open for all $i\in\{0,\dots, k\}$.

\begin{lem}\label{lem:lipschitz} For the site-bond Bernoulli percolation model on the square lattice $\bbL^2$, it holds that
\[\frac{\partial\theta_n(s,b)}{\partial s}\leq \frac{4-3b}{2s(1-b)}\cdot\frac{\partial\theta_n(s,b)}{\partial b}.\]
\end{lem}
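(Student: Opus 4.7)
My plan is to apply Russo's formula to the increasing event $A_n := \{0\leftrightarrow \partial B_{n+1}\}$, obtaining
\[\frac{\partial \theta_n}{\partial s} = \sum_{x\in\bbZ^2} P_{s,b}(x\text{ piv}), \qquad \frac{\partial \theta_n}{\partial b} = \sum_{e\in\bbE^2} P_{s,b}(e\text{ piv}),\]
where ``piv'' abbreviates ``pivotal for $A_n$''. It then suffices to establish the per-site local inequality
\[P_{s,b}(x\text{ piv}) \leq \frac{4-3b}{4s(1-b)} \sum_{e\ni x} P_{s,b}(e\text{ piv}) \]
for every $x\in\bbZ^2$, since summing it over $x$ and noting that each bond is counted twice (once per endpoint) immediately yields the claimed bound.

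To prove the local inequality, I would fix $x$ and condition on the configuration $\omega'$ of every site $\ne x$ and every bond not incident to $x$. Under $\omega'$, call a neighbour $y$ of $x$ \emph{incoming} (resp.\ \emph{outgoing}) if $y$ is open in $\omega'$ and connected to $0$ (resp.\ to $\partial B_{n+1}$) by an $\omega'$-open path, and write $I$, $O$ for the resulting sets of neighbours with $i:=|I|$, $o:=|O|$. If $\omega'$ already admits a connection $0\leftrightarrow\partial B_{n+1}$ the local inequality is trivial, so I may assume it does not, in which case $I\cap O=\varnothing$. Since $A_n$ is increasing in both sites and bonds, a short unpacking shows that $x$ is pivotal iff at least one bond from $x$ to $I$ and at least one bond from $x$ to $O$ are open, while a bond $xy$ with $y\in I$ is pivotal iff $x$ is open, every other bond from $x$ to $I$ is closed (so that $xy$ is not redundant) and at least one bond from $x$ to $O$ is open (symmetrically for $y\in O$). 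Taking product-measure probabilities then gives
\begin{align*}
P_{s,b}(x\text{ piv}\mid\omega') &= (1-(1-b)^i)(1-(1-b)^o),\\
\sum_{e\ni x} P_{s,b}(e\text{ piv}\mid\omega') &= s\bigl[ i(1-b)^{i-1}(1-(1-b)^o) + o(1-b)^{o-1}(1-(1-b)^i) \bigr].
\end{align*}

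Writing $q:=1-b$, the local inequality reduces to the algebraic bound
\[4q(1-q^i)(1-q^o) \leq (1+3q)\bigl[ iq^{i-1}(1-q^o) + oq^{o-1}(1-q^i) \bigr]\]
for all $(i,o)$ with $i,o\geq 1$ and $i+o\leq 4$, i.e.\ the six pairs $(1,1),(1,2),(2,1),(1,3),(3,1),(2,2)$. Each is a low-degree polynomial inequality in $q\in[0,1]$, verified by direct expansion. The tight case is $(i,o)=(2,2)$, where the inequality simplifies to $1-q^2 \leq 1+3q$ and becomes sharp as $q\to 0$; this is exactly what forces the Lipschitz constant to take the form $(4-3b)/[2s(1-b)]$ and is presumably where the improvement over \cite{Chayes00} originates. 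The main difficulty I foresee is the careful bookkeeping in the derivation of the two conditional pivotality formulae; once those identities are in hand the algebraic case check is routine.
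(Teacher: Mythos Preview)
Your proposal is correct and shares the paper's overall architecture (Russo's formula followed by the per-vertex local inequality with constant $\tfrac{4-3b}{4s(1-b)}$, summed with the factor $2$ from double-counting bonds), but the proof of the local inequality itself is genuinely different. The paper works with the events $A_{x,n}^{i,j}$ where $(i,j)$ counts \emph{open} bonds from $x$ to incoming/outgoing neighbours; for $(i,j)\neq(2,2)$ it uses the inclusion $A_{x,n}\setminus A_{x,n}^{2,2}\subset\bigcup_{e\in E_x}\delta_e A_n$, and for the delicate $(2,2)$ case it invokes planarity (the two incoming bonds cannot both be vertical while the two outgoing are both horizontal) to show that closing each of the four bonds lands in four disjoint sub-events of $A_{x,n}^{1,2}\cup A_{x,n}^{2,1}$, yielding $4P(A_{x,n}^{2,2})\le\tfrac{b}{1-b}P(A_{x,n}\setminus A_{x,n}^{2,2})$. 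You instead condition on the entire off-$x$ configuration $\omega'$, write the two conditional pivotalities explicitly as products, and reduce everything to six one-variable polynomial inequalities in $q=1-b$, the tight one being $(2,2)$. Your route is more computational but also more robust: it never uses planarity, so the same bound holds on any transitive graph of degree $4$, and it makes transparent why the constant is exactly $\tfrac{4-3b}{2s(1-b)}$. One small point of bookkeeping to watch: in your definition of $I$ and $O$ you should allow $y=0$ (resp.\ $y\in\partial B_{n+1}$) to count as incoming (resp.\ outgoing) regardless of the site state of $y$, since the endpoints of a connecting path are not required to be open; the paper handles this explicitly and you will need to as well.
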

\begin{proof}
Let $A_n$ be the event $(0\leftrightarrow\partial B_{n+1})$. Given $x\in\bbZ^2$ and $e\in\bbE^2$, let $\delta_xA_n$ and $\delta_e A_n$ be the events where $x$ and $e$ are pivotal for the event $A_n$, respectively, and $A_{x,n}:=\d_xA_n\cap \{x\mbox{ is open}\}$.

By Russo's formula, we have that
\begin{equation}
    \frac{\partial\theta_n(s,b)}{\partial s}=\sum_{x\in B_n}P_{s,b}(\delta_x A_n)\quad \textrm{and} \quad \frac{\partial\theta_n(s,b)}{\partial b}=\sum_{e\in \bbE(B_{n+1})}P_{s,b}(\delta_e A_n).
    \end{equation}
    Define $E_x=\{e\in \bbE(B_{n+1}):x\in e\}$. Thus, observing that each bond in $\bbE(B_{n+1})$ contains at most two vertices in $B_n$, it is enough to prove that:
    \begin{equation}\label{sitebond}
     P_{s,b}(\delta_x A_n)\leq \frac{4-3b}{4s(1-b)}\sum_{e\in E_x}P_{s,b}(\delta_e A_n) .
    \end{equation}
Given $\omega\in A_{x,n}$, define $\Ei_{x,\omega}$ as the set of bonds in $E_x$ such that, in the configuration $\omega$, if $y$ is the other end-vertex distinct of $x$, $y$ is either the origin or else $y$ is open and connected to the origin by an open path that does not pass through $x$. Analogously, we define $\Eo_{x,\omega}$ as the set of bonds in $E_x$ such that, in the configuration $\omega$, if $y$ is the other end-vertex distinct of $x$, $y$ belongs to $\partial B_{n+1}$ or else $y$ is open and connected to $\partial B_{n+1}$ by an open path that does not pass through $x$. Observe that $\Ei_{x,\omega}\neq\varnothing$ and $\Eo_{x,\omega}\neq\varnothing$, we define $A_{x,n}^{i,j}:=\{\omega\in A_{x,n}:|\Ei_{x,\omega}|=i,|\Eo_{x,\omega}|=j\}$. The possible index set is $I=\{(1,1),(1,2),(1,3),(2,1),(3,1),(2,2)\}$, so that $A_{x,n}=\bigcup_{(i,j)\in I} A_{x,n}^{i,j}$ and $A_{x,n}\setminus A_{x,n}^{2,2}\subset\bigcup_{e\in E_x} \delta_e A_n.$

We further define 
\begin{align*}
A_{x,n}^{1,2,|}&{}=\left\{\o\in A_{x,n}^{1,2},\text{ $\Ei$ consists of a vertical bond}\right\}
\\A_{x,n}^{1,2,-}&{}=\left\{\o\in A_{x,n}^{1,2},\text{ $\Ei$ consists of a horizontal bond}\right\}
\end{align*}
and similarly for $A_{x,n}^{2,1,|}$ and $A_{x,n}^{2,1,-}$, replacing $\mathrm{in}$ by $\mathrm{out}$.
By planarity it is impossible to have $\Ei_{x,\o}$ consisting of two vertical bonds and $\Eo_{x,\o}$ consisting of two horizontal ones or vice versa. Thus, given a configuration in $A_{x,n}^{2,2}$, by closing each of the bonds in $E_x$, we obtain a configuration in each of the four events, $A_{x,n}^{1,2,|}$, etc., above, which are manifestly disjoint. Therefore,
\begin{align*}4P_{s,b}\left(A_{x,n}^{2,2}\right)&{}\le\frac{b}{1-b}P_{s,b}\left(A_{x,n}^{1,2,|}\sqcup A_{x,n}^{1,2,-}\sqcup A_{x,n}^{2,1,|}\sqcup A_{x,n}^{2,1,-}\right)\\
&{}\le \frac{b}{1-b}P_{s,b}\left(A_{x,n}\setminus A^{2,2}_{x,n}\right).
\end{align*}
Observing that $A_{x,n}^{i,j}\subset \bigcup_{e\in E_x}\delta_e A_n$ for all $(i,j)\neq (2,2)$, we get
\begin{align*}
    P_{s,b}(\delta_x A_n)= {}& s^{-1}.P_{s,b}(A_{x,n})=s^{-1}.\left(P_{s,b}\left(A_{x,n}\setminus A_{x,n}^{2,2}\right)+P_{s,b}\left(A_{x,n}^{2,2}\right)\right)\\
    \le{}& s^{-1}.P_{s,b}\left(A_{x,n}\setminus A_{x,n}^{2,2}\right)\left(1+\frac{b}{4(1-b)}\right)\\
    \le{}& \frac{4-3b}{4s(1-b)}P_{s,b}\left(\bigcup_{e\in E_x} \delta_e A_n\right).
    \end{align*}
This proves \cref{sitebond} and concludes the proof.
\end{proof}

\begin{cor}
\label{cor:pc}
For the mixed site-bond percolation on $\bbL^2$, it holds that
\begin{equation}
\label{eq:cor:pc}\sc(b)\le \exp\left(-\frac{2}{3}\left(b-\frac{1}{2}+\frac{1}{3}\log\frac{8-6b}{5}\right)\right),
\end{equation}
where $\sc(b)=\inf\{s\in[0,1]:\lim_{n}\theta_n(s,b)>0\}$ (see \cref{fig:mixed}).
\end{cor}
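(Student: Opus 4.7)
The plan is to use \cref{lem:lipschitz} to construct a curve in the $(s,b)$ parameter plane along which $\theta_n$ is non-decreasing, starting from a point known to be supercritical. Since pure bond percolation on $\bbL^2$ has critical probability $1/2$ \cite{Kesten80}, every point $(1,b)$ with $b>1/2$ is supercritical for the mixed model. Transporting this supercriticality along the curve will produce the desired upper bound on $\sc(b)$; this is essentially the Chayes--Schonmann strategy, and the improved Lipschitz constant from \cref{lem:lipschitz} is precisely what will deliver the improved bound.

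Concretely, for a fixed $b_0\in(1/2,1)$ I would define $s_{b_0}:[b_0,1]\to(0,1]$ as the unique solution of the ODE
\[\frac{ds}{db}=-\frac{2s(1-b)}{4-3b},\qquad s(b_0)=1.\]
Along the parametrised curve $b\mapsto(s_{b_0}(b),b)$, the chain rule combined with \cref{lem:lipschitz} yields
\[\frac{d}{db}\theta_n(s_{b_0}(b),b)=\frac{ds_{b_0}}{db}\cdot\frac{\partial\theta_n}{\partial s}+\frac{\partial\theta_n}{\partial b}\ge -\frac{2s_{b_0}(1-b)}{4-3b}\cdot\frac{4-3b}{2s_{b_0}(1-b)}\cdot\frac{\partial\theta_n}{\partial b}+\frac{\partial\theta_n}{\partial b}=0,\]
so $\theta_n(s_{b_0}(b),b)\ge\theta_n(1,b_0)\ge\theta(1,b_0)>0$ for every $n$ and every $b\in[b_0,1]$. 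Letting $n\to\infty$ shows that $(s_{b_0}(b),b)$ is supercritical and therefore $\sc(b)\le s_{b_0}(b)$.

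It then remains to integrate the ODE. Separating variables and using the substitution $u=4-3b$ one obtains
\[\log s_{b_0}(b)=\frac{2}{9}\bigl((4-3b)-(4-3b_0)\bigr)-\frac{2}{9}\log\frac{4-3b}{4-3b_0}.\]
Since the bound $\sc(b)\le s_{b_0}(b)$ holds for every $b_0\in(1/2,b)$, one may let $b_0\to 1/2^+$ (legitimate by continuous dependence of ODE solutions on their initial condition); after simplification this gives
\[\log \sc(b)\le \frac{2}{9}\bigl((4-3b)-\tfrac{5}{2}\bigr)-\frac{2}{9}\log\frac{2(4-3b)}{5}=-\frac{2}{3}\left(b-\frac{1}{2}+\frac{1}{3}\log\frac{8-6b}{5}\right),\]
which is \cref{eq:cor:pc} for $b>1/2$. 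For $b\le 1/2$ the right-hand side of \cref{eq:cor:pc} is at least $1$, so the inequality is trivial there.

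The substantive work is already packaged in \cref{lem:lipschitz}; the remainder of the argument is essentially mechanical. The only point that requires a moment of care is the passage $b_0\to 1/2^+$, but this is routine since the starting point $(1,b_0)$ is supercritical for every $b_0>1/2$ and $s_{b_0}(b)$ is smooth in $b_0$. If one prefers to avoid even this mild limit, an equivalent route is to fix $b>1/2$ and apply the argument with $b_0=b-\varepsilon$, then send $\varepsilon\to 0$.
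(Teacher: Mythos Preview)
Your argument is correct and follows essentially the same route as the paper: both derive the bound by integrating the ODE $ds/db=-2s(1-b)/(4-3b)$ coming from \cref{lem:lipschitz} with initial data at $b=1/2$, and your version is in fact more careful than the paper's sketch in handling the starting point via $b_0>1/2$ and a limit. One small slip: the parenthetical ``equivalent route'' with $b_0=b-\varepsilon$ does not recover the stated bound (it only gives $\sc(b)\le 1$); you presumably meant $b_0=1/2+\varepsilon$, which is just a rephrasing of the same limit.
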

\begin{proof}
Observing that the limit in $n$ of the gradient vector $\nabla\theta_n$ is orthogonal to the critical curve $\sc(b)$, by \cref{lem:lipschitz}, we have that the curve $\sc(b)$ is bounded from above by the solution of the differential equation
\[\frac{\mathrm{d}s}{\mathrm{d}b}=-\frac{2s(1-b)}{4-3b}\] with $s(\frac{1}{2})=1$.
\end{proof}

\begin{figure}
\centering
\begin{tikzpicture}[line cap=round,line join=round,x=10.0cm,y=10.0cm]
\draw[->] (0.5,0.5) -- (1.05,0.5);
\foreach \x in {0.5,0.6,0.7,0.8,0.9,1}
\draw[shift={(\x,0.5)}] (0pt,2pt) -- (0pt,-2pt) node[below] {\footnotesize $\x$};
\draw[->] (0.5,0.5) -- (0.5,1.05);
\foreach \y in {0.5,0.6,0.7,0.8,0.9,1}
\draw[shift={(0.5,\y)}] (2pt,0pt) -- (-2pt,0pt) node[left] {\footnotesize $\y$};

\draw (1.05,0.5) node[below] {$b$};
\draw (0.5,1.05) node[left] {$s$};
\draw (0.5,1.05) node[right] {\cite{Kesten80}};
\draw (1,0.6795) node[right] {\cite{Wierman95}};
\draw (0.65,0.82) node[left] {\cite{Hovi96}};
\draw (1,0.5927) node[right] {\cite{Ziff92}};
\draw (0.9,0.755) node[right] {\cite{Hammersley80}};
\draw (0.9,0.89) node[above] {\cref{eq:cor:pc}};

\draw[ultra thick,dashed, smooth,samples=100,domain=0.6795:1] plot(\x,{0.6795/(\x)});
\draw[ultra thick, smooth,samples=100,domain=0.5:1] plot(\x,{2.718281828^(-(2/3)*((\x)-0.5+ln((8-6*(\x))/5)/3))});
\draw (1,1)-- (0.5,1);
\draw (1,1)-- (1,0.5);
\draw [very thin, dashed, color=gray] (0.74,0.5)-- (0.74,0.92);
\draw [thick, dotted] (0.5,1)-- (0.5336,0.95);
\draw [thick, dotted] (0.5336,0.95)-- (0.6158,0.85);
\draw [thick, dotted] (0.6158,0.85)-- (0.65,0.8156);
\draw [thick, dotted] (0.65,0.8156)-- (0.7262,0.75);
\draw [thick, dotted] (0.7262,0.75)-- (0.75,0.7321);
\draw [thick, dotted] (0.75,0.7321)-- (0.85,0.6673);
\draw [thick, dotted] (0.85,0.6673)-- (0.95,0.6152);
\draw [thick, dotted] (0.95,0.6152)-- (1,0.5927);
\end{tikzpicture}
\caption{\label{fig:mixed}Illustration of \cref{cor:pc,rem:pc}.}
\end{figure}

\begin{rem}
\label{rem:pc}
In \cref{fig:mixed} we represent the region of supercritical parameters $(s,b)$ (such that $\lim_n\theta_n(s,b)>0$). There result of \cref{cor:pc} is that all points above the thick solid line are supercritical. This should be compared and combined with a previous result by Hammersley \cite{Hammersley80} stating that $(s,b)$ is supercritical, whenever $sb\ge \sc(1)$. The latter quantity is the critical probability of site percolation on $\bbL^2$ and the best known upper bound on it to the authors' knowledge is $\sc(1)\le 0.6795$ due to Wierman \cite{Wierman95}. Combining these two results, one obtains that the region delimited by the dashed thick hyperbola is also supercritical. The crossover between the our \cref{cor:pc} of the Chayes--Schonmann approach \cite{Chayes00}, extrapolating from $\sc(0.5)=1$ \cite{Kesten80}, and the Hammersley--Wierman bound is for $b\approx 0.74$. Hence, our result offers an improvement for all $b\in(0.5,0.74)$, which is also the region of interest for us in \cref{sec:high}. For reference, the dotted broken line represents the result of (nonrigorous) numerical estimation of the actual critical curve $\sc$ \cite{Hovi96}*{Table I} and $\sc(1)$ \cite{Ziff92}.
\end{rem}



\end{document}